\documentclass[11pt,a4paper,leqno]{amsart}

\usepackage{amsmath,amssymb,amsfonts,amsthm,enumitem,hyperref}
\setlist[enumerate]{label=(\roman*)}
\usepackage{tikz-cd}

\usepackage{hyperref}
\hypersetup{
	colorlinks = true,
	linkcolor = black,
	citecolor = black
}

\makeatletter
\@namedef{subjclassname@2020}{\textup{2020} Mathematics Subject Classification}
\makeatother

\newtheorem{thm}{Theorem}[section]
\newtheorem{lemma}[thm]{Lemma}
\newtheorem{prop}[thm]{Proposition}
\newtheorem{cor}[thm]{Corollary}
\newtheorem{question}{Question}
\theoremstyle{definition}
\newtheorem{df}[thm]{Definition}

\theoremstyle{remark}
\newtheorem{rem}[thm]{Remark}
\numberwithin{equation}{section}
\theoremstyle{plain}
\newcounter{theoremintro}
\newtheorem{theoremi}[theoremintro]{Theorem}

\DeclareMathOperator{\sr}{sr}
\DeclareMathOperator{\rr}{rr}
\DeclareMathOperator{\supp}{supp}
\DeclareMathOperator{\homeo}{Homeo}
\DeclareMathOperator{\prim}{Prim}
\DeclareMathOperator{\nucdim}{\dim_{\operatorname{nuc}}}

\newcommand{\Nb}{\mathbb{N}}
\newcommand{\Zb}{\mathbb{Z}}
\newcommand{\Cb}{\mathbb{C}}
\newcommand{\Tb}{\mathbb{T}}
\newcommand{\Rb}{\mathbb{R}}
\newcommand{\Zc}{\mathcal{Z}}
\newcommand{\Kc}{\mathcal{K}}

\begin{document}

\begin{abstract}
    We examine crossed product $C^*$-algebras associated with free non-minimal actions of countably infinite discrete abelian groups on the circle, extending the work of Putnam, Schmidt, and Skau. We obtain a large class of unital separable nuclear and non-simple $C^*$-algebras that are quasidiagonal, have stable rank one, and admit a unique tracial state. We determine their ideal structure and establish an improved uniform upper bound for their nuclear dimension. Finally, in the case $G = \Zb^d$, we compute the $K$-theory and trace pairing map.
\end{abstract}

\keywords{$C^*$-algebra, crossed product, non-simple, non-minimal, stable rank one, quasidiagonal, classification of $C^*$-algebras, $K$-theory}

\subjclass[2020]{Primary 46L55, 37B05, Secondary 46L35, 37E10, 37C85}
 
\title[$C^*$-algebras associated with non-minimal actions on the circle]{Crossed product $C^*$-algebras associated with non-minimal actions on the circle}
\author{Jamie Bell}
\address{Jamie Bell, Mathematical Institute, University of Münster, Einsteinstrasse 62, 48149 Münster, Germany.}
\email{jbell@uni-muenster.de}

\thanks{Funded by the Deutsche Forschungsgemeinschaft (DFG, German Research Foundation) under Germany's Excellence Strategy EXC 2044/2 –390685587, Mathematics M\"unster: Dynamics-Geometry-Structure and project-ID 427320536, SFB 1442, of the DFG}

\maketitle

\section{Introduction}

The study of $C^*$-algebras has a profound connection with topological dynamics via the construction of crossed products, which translate group actions on spaces into operator-algebraic data. A central problem becomes to understand how properties of dynamics are reflected in their associated crossed products, and vice versa. Irrational rotation algebras, which arise from minimal integer actions on the circle, are prototypical of this fruitful interplay. Despite their straightforward underlying dynamics, they have, for instance, been instrumental to the development of Connes' noncommutative geometry programme \cite{Con94}, and in motivating early breakthroughs in the modern classification and structure theory for simple separable nuclear $C^*$-algebras. The striking success of the so-called Elliott classification programme continues to propel major developments in the study of $C^*$-algebras. 

While early classification efforts included non-simple algebras -- notably Bratteli--Elliott’s classification of AF $C^*$-algebras \cite{Bra72,Ell76} -- subsequent work, particularly as it relates to crossed products, has predominantly focussed on simple $C^*$-algebras. For an action $G\curvearrowright X$, every non-trivial closed $G$-invariant subset of $X$ gives rise to a non-trivial ideal of the crossed product $C_0(X) \rtimes G$. In particular, when studying simple crossed product $C^*$-algebras one must consider minimal (and usually free) actions, i.e., those without non-trivial closed invariant subsets, thereby ruling out natural examples such as those arising from hyperbolic and symbolic dynamics \cite{Nek22}. In this note, we investigate crossed products associated with certain non-minimal actions, i.e., non-simple crossed product $C^*$-algebras.

With a view toward possible applications to non-simple classification, we of course need to impose some restrictions, otherwise the general question of structure and classification is hopeless. A natural starting point for studying dynamical systems is to analyse their minimal sets (or, equivalently, their minimal subsystems), which roughly correspond to simple quotients in the crossed product.\footnote{For free actions of discrete amenable groups, which will shortly be our focus, this is indeed a bijective correspondence.} Although Zorn's lemma guarantees the existence of at least one minimal set for actions on compact spaces, minimal sets may be too abundant to describe collectively in any effective way. For instance, the minimal subsystems of the Bernoulli action $G\curvearrowright \{0,1\}^G$, known as minimal $G$-subshifts, form a vast uncountable family of dynamical systems which, while certainly an important class in their own right, are impractical to describe in any definitive manner. While certain special cases of non-minimal actions on the Cantor set and their $C^*$-algebras have been studied (see \cite{BezNiuSun21}), this rather suggests one should seek situations in which the number of minimal sets is manageable. In stark contrast with Cantor dynamics, the rigid topology of the circle supports very few minimal sets, all of which can be explicitly described (see Proposition~\ref{P-structure}), and this provides an important starting point for our investigations. For a more complete discussion on the subject of group actions on the circle, we refer to \cite{Ghy01,Nav11}.  

The second important consideration is which acting groups we allow. We shall consider actions of countable discrete groups, which, for actions on compact metrisable spaces, ensures that our crossed products are unital and separable. In this setting a common dividing line is that of amenability. $C^*$-algebraically, this grants access to nuclearity, which underpins much of the classification machinery. As such, we will also be interested in actions of amenable groups. At the technical level, too, we will see that amenability plays a crucial role, notably via Proposition~\ref{P-amen}. The second reason to focus on actions of amenable groups is that we will require at various points for our actions to be free, i.e., having no fixed points. By a classical result essentially due to H\"older \cite{Hol96} (see also \cite[Theorem 2.2.32]{Nav11}), if a discrete group admits a free action on the circle, then it must be abelian.  

Altogether, we turn rather inevitably to the study of crossed product $C^*$-algebras of free non-minimal actions of abelian groups on the circle. While such crossed products have been implicitly considered previously, an explicit study of their finer structural and regularity properties has rarely appeared in the literature. One notable exception is the work of Putnam, Schmidt, and Skau on Denjoy homeomorphisms \cite{PutSchSka86} that inspired the present paper, and which corresponds to the case of integer actions. As such, we refer to these actions as \emph{Denjoy actions} (Definition~\ref{D-Denjoy}). 

We establish several desirable properties for crossed products arising from free Denjoy actions, some extending \cite{PutSchSka86} and others entirely new. Just as the irrational rotation algebras (and their higher-dimensional brethren) have long inspired the study of simple $C^*$-algebras, we hope to advertise crossed products of Denjoy actions as rich non-simple counterparts, offering a similarly fertile ground for further exploration. 

Our main results are summarised below:

\begin{theoremi}\label{T-A}
	Let $G\curvearrowright \Tb$ be a free action of a countably infinite discrete abelian group. Then $A := C(\Tb) \rtimes G$ satisfies the following:
	\begin{enumerate} 
		\item $A$ has a unique trace;
		\item $A$ has stable rank one;
		\item $A$ is quasidiagonal; 
		\item $1\le \dim_{\operatorname{nuc}}(A) \le 3$, and $\dim_{\operatorname{nuc}}(A) = 1$ if $G\curvearrowright \Tb$ is minimal. 
	\end{enumerate} 
	If $G\curvearrowright \Tb$ is moreover non-minimal, then $A$ has a unique maximal ideal, isomorphic to a direct sum of countably many copies of $C_0(\Rb)\otimes \mathcal{K}(\ell^2(G))$, whose simple quotient is an approximately subhomogeneous $C^*$-algebra.
\end{theoremi}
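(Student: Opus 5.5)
The argument starts from the structure of free actions of abelian groups on the circle. By H\"older's theorem, the action is semiconjugate to a free action by rotations: there is a continuous monotone degree-one surjection $\pi\colon \Tb\to\Tb$ and an injective homomorphism $\rho\colon G\to\Tb$ with $\pi(g x)=\pi(x)+\rho(g)$. Because $G$ is infinite, $\rho(G)$ is dense, so the rotation action is minimal. Proposition~\ref{P-structure} should then give the following picture: either the action is minimal, hence conjugate to this rotation, or there is a unique minimal set $K$, which is a Cantor set, and its complement is a countable union of open intervals $I_n$. These intervals are permuted freely by $G$, and there are only countably many $G$-orbits of them. The plan is to reduce everything to this dichotomy.

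For the minimal case I would use the conjugacy $A\cong C(\Tb)\rtimes_\rho G$, a noncommutative-torus-type algebra. It is simple (the action is free and minimal and $G$ is amenable), has a unique trace because Lebesgue measure is the only invariant measure for a dense rotation group, and has $\mathcal Z$-stability and finite nuclear dimension by known results for free minimal actions of abelian groups. From this one gets nuclear dimension $1$, stable rank one, and quasidiagonality via Tikuisis--White--Winter; none of this is new work.

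For the non-minimal case, the first step is to show that the invariant probability measures pull back from Lebesgue measure. Any invariant measure $\mu$ gives no mass to the wandering intervals, since each $I_n$ has infinitely many disjoint translates of equal mass. So $\mu$ lives on $K$, and $\pi_*\mu$ is rotation-invariant, hence Lebesgue. Since $\pi$ is injective off a countable set where the measure has no atoms, $\mu$ is unique, and amenability together with freeness (Proposition~\ref{P-amen}) then gives a unique trace.

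Next comes the ideal structure. The open invariant set $U=\Tb\setminus K$ gives the ideal $C_0(U)\rtimes G$. Choosing one interval from each orbit, and using that $G$ permutes the intervals freely, this ideal is isomorphic to $\bigoplus_{\text{orbits}} C_0(\Rb)\otimes\mathcal K(\ell^2(G))$; this is the standard fact that $C_0(G\times Y)\rtimes G\cong C_0(Y)\otimes \mathcal K(\ell^2 G)$. The quotient $C(K)\rtimes G$ is simple because $K$ is minimal and the action is free, so the ideal is maximal. It is also the unique maximal ideal, because every proper ideal is the ideal of some invariant closed set (the action is free, so topologically free, and $G$ is amenable), and every such set contains $K$. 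That $C(K)\rtimes G$ is ASH I would get from the Cantor minimal $\Zb^d$-type machinery: write $G$ as an increasing union of finitely generated subgroups and build Rokhlin-type clopen partitions of $K$ coming from the rotation factor. Alternatively, it follows from classification, since the algebra is simple, has finite nuclear dimension, a unique trace and satisfies UCT.

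The remaining properties go through the extension $0\to I\to A\to C(K)\rtimes G\to0$. For stable rank one, $I$ and the quotient both have stable rank one, and the obstruction is the index map $K_1(C(K)\rtimes G)\to K_0(I)$; here $K_0(I)=0$ since $K_0(C_0(\Rb))=0$, so by Nistor/Brown--Pedersen $\sr(A)=1$. For quasidiagonality, the unique trace is faithful only on the quotient, so I would instead use Pimsner's criterion for $C(X)\rtimes G$ with amenable $G$ (quasidiagonality iff every point is $G$-chain recurrent/pseudo-non-wandering), which holds because the semiconjugacy to a minimal rotation forces chain recurrence everywhere on $\Tb$. For nuclear dimension, combine $\dim_{\operatorname{nuc}}(I)\le1$ and $\dim_{\operatorname{nuc}}(\text{quotient})\le1$ using the extension estimate $\dim_{\operatorname{nuc}}(A)\le \dim_{\operatorname{nuc}}(I)+\dim_{\operatorname{nuc}}(A/I)+1=3$. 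The lower bound $1$ holds because $A$ is not AF, as $K_1\neq0$.

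I expect two main obstacles. The first is quasidiagonality, specifically extending Pimsner's $\Zb$ result to general abelian $G$; I would try to approximate by finitely generated subgroups and use inductive limits. The second is the ASH statement for the quotient.
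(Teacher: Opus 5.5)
Most of your outline matches the paper's argument. This covers unique ergodicity combined with the trace/invariant-measure correspondence for free actions, and Green's theorem together with the ideal correspondence for free actions of amenable groups. It also covers ASH for the quotient via classification, and stable rank one and $\dim_{\operatorname{nuc}}(A)\le 3$ via the extension $0\to J\to A\to B\to 0$ with $K_0(J)=0$.

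The genuine gap is quasidiagonality (iii) in the non-minimal case. Pimsner's theorem (quasidiagonality $\Leftrightarrow$ AF-embeddability $\Leftrightarrow$ every point pseudo-non-wandering) is a result about $C(X)\rtimes\mathbb{Z}$ for a single homeomorphism. No such dynamical criterion is available for $\mathbb{Z}^d$-actions with $d\ge 2$, let alone for arbitrary countable abelian groups. Your proposed reduction does not reach Pimsner's setting. Passing to finitely generated subgroups is legitimate, since quasidiagonality is local and survives inductive limits with injective connecting maps. But those subgroups have the form $\mathbb{Z}^d\times F$ with $F$ finite, and they are not cyclic. Iterating $C(\Tb)\rtimes\mathbb{Z}^d\cong(C(\Tb)\rtimes\mathbb{Z}^{d-1})\rtimes\mathbb{Z}$ gives a $\mathbb{Z}$-crossed product with noncommutative coefficient algebra, where Pimsner's theorem says nothing. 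As it stands, (iii) is only established for $G=\mathbb{Z}$.

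The missing idea is to treat quasidiagonality, like stable rank one, as an extension problem, and you already have every ingredient. By Brown--Dadarlat, suppose $0\to J\to A\to B\to 0$ is an extension of separable $C^*$-algebras with $J$ and $B$ quasidiagonal, $B$ nuclear and satisfying the UCT, and the index map $K_1(B)\to K_0(J)$ vanishing. Then $A$ is quasidiagonal. Each hypothesis holds here:
\begin{enumerate}
\item $J\cong\bigoplus C_0(\mathbb{R})\otimes\mathcal{K}(\ell^2(G))$ is quasidiagonal, being a stabilised commutative algebra.
\item $B=C(Y)\rtimes G$ is simple, so its unique trace is faithful; this is exactly your own remark that the trace is faithful on the quotient. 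Tikuisis--White--Winter together with Tu's UCT theorem then makes $B$ quasidiagonal.
\item The index map vanishes because $K_0(J)=0$, just as in your stable rank argument.
\end{enumerate}
This is the paper's proof of (iii).

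Two minor points. In the non-minimal case, $\dim_{\operatorname{nuc}}(A)\ge 1$ is immediate from $\dim_{\operatorname{nuc}}(A)\ge\dim_{\operatorname{nuc}}(J)=1$, so you need not assert $K_1(A)\neq 0$. In the minimal case with $G$ not finitely generated, $K_1(A)\neq 0$ needs an argument. One option is the paper's identification $C(\Tb)\rtimes G\cong C^*(G)\rtimes\mathbb{Z}$. There the nonzero class $[1]_0\in K_0(C^*(G))$ lies in $\ker(\mathrm{id}-\alpha_*)$, so it is hit by the Pimsner--Voiculescu boundary map from $K_1$.
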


The basic observation underlying much of Theorem~\ref{T-A} is that for an action $G\curvearrowright X$ and  minimal set $Y\subseteq X$, we obtain a $G$-equivariant quotient $C(Y)$ and ideal $C_0(X\setminus Y)$ of $C(X)$. If the action is moreover free, we thereby obtain a simple quotient $C(Y) \rtimes G$, and an ideal $C_0(X\setminus Y) \rtimes G$. In the case of free actions of discrete groups on the circle, there is a unique minimal set (which may be all of $\Tb$, in which case the ideal is trivial) and so $C(\Tb)\rtimes G$ can be understood in terms of the aforementioned quotient and ideal. This is also the approach taken in \cite{PutSchSka86}. As it turns out, when $G$ is abelian, the quotient is a well-understood stably finite Elliott-classifiable\footnote{That is, a simple, separable, unital and nuclear $C^*$-algebra satisfying the Universal Coefficient Theorem and tensorially absorbing the Jiang--Su algebra $\Zc$.} $C^*$-algebra, while the ideal is a continuous-trace $C^*$-algebra by Green's theorem \cite{Gre77}. This reflects the fact that $Y$ and $\Tb\setminus Y$ are, respectively, the non-wandering and wandering set for the action, in the sense of Birkhoff (cf.~\cite{Tom03}). With these structural results at hand, it is a matter of verifying the conditions required to lift regularity properties to the extension $C(\Tb)\rtimes G$. 

Unfortunately in the present note we do not realise the aim of classifying crossed products arising from free Denjoy actions, though we should point out that when $G = \Zb$ this is one of the main results of \cite{PutSchSka86}. By virtue of the structural description given in Theorem~\ref{T-A}, such a classification result would constitute a dovetailing of the recent classification results for simple nuclear $C^*$-algebras with the much earlier Dixmier--Douady classification of continuous-trace $C^*$-algebras \cite{DixDou63}. To achieve their classification result, Putnam, Schmidt, and Skau rely on Markley's dynamical classification of Denjoy homeomorphisms up to conjugacy \cite{Mar70} which, to the best of our knowledge, does not exist for actions of abelian groups. This approach parallels the classification of irrational rotation algebras \cite{Rie81,PimVoi80b}, where the angle of rotation is recovered via the pairing of a Rieffel projection in $K_0$ with the trace, which suffices to deduce isomorphism by Poincar\'e's classification theorem. It is noteworthy that the striking work of Giordano--Putnam--Skau (foreshadowed in \cite{PutSchSka86}) on Cantor minimal systems inverts this paradigm and derives dynamical classification from the abstract $C^*$-algebraic classification \cite{GioPutSka95}. 

Despite this, it is evident that $K$-theory will play an important role in obtaining further results about crossed products of Denjoy actions and so, restricting to actions of $\Zb^d$, we compute the $K$-theory and trace pairing map, again following the general outline presented in \cite{PutSchSka86}. 

\begin{theoremi}\label{T-Kthyi}
	Fix $d\in \Nb$. Let $\Zb^d \curvearrowright \Tb$ be a free Denjoy action with $\gamma_i$ the rotation number of the standard basis vector $e_i \in \Zb^d$, and let $A = C(\Tb) \rtimes \Zb^d$. Then, for $i\in \{0,1\}$,  
	\[
		K_i(A) \cong \Zb^{2^d}.
	\]	
	Moreover, denoting by $\tau$ the unique trace on $A$, there is an identification $K_0(A)\cong \Zb^{2^d}$ sending $[1_A]_0$ to the first standard basis vector, with respect to which the trace pairing $\tau_* : K_0(A) \to \Rb$ is given by 
	\[
		\tau_*(n_0,n_1,\ldots,n_{2^d - 1}) = n_0 + \sum_{i=1}^d n_i \gamma_i.	
	\]
	In particular,
	\[
		\tau_*(K_0(A)) = \Zb + \gamma_1\Zb + \cdots + \gamma_d \Zb.
	\] 
\end{theoremi}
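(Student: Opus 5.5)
Throughout, $Y\subseteq\Tb$ is the unique minimal set (a Cantor set, since the action is Denjoy), $I = C_0(\Tb\setminus Y)\rtimes\Zb^d$ the ideal, and $B = C(Y)\rtimes\Zb^d$ the quotient. The plan is to compute $K_*(A)$ from the six-term exact sequence of $0\to I\to A\to B\to 0$, as in \cite{PutSchSka86}.

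\emph{The ideal.} Freeness means $\Tb\setminus Y$ is a countable disjoint union of open intervals, permuted freely by $\Zb^d$, with finitely many orbits $J_1,\dots,J_k$. Hence $I\cong\bigoplus_{j=1}^k C_0(\Rb)\otimes\Kc(\ell^2(\Zb^d))$, so $K_0(I)=0$ and $K_1(I)\cong\Zb^k$.

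\emph{The quotient.} Semiconjugating to the rotation action by $\gamma_1,\dots,\gamma_d$, the space $Y$ is obtained from $\Tb$ by cutting it at the countably many points $g\cdot x_j$, one orbit for each $J_j$. By the Pimsner--Voiculescu sequence iterated over the coordinates (equivalently, the Chern character for $\Zb^d$), $K_*(B)\cong H^*(\Zb^d;C(Y,\Zb))$. There is an exact sequence of $\Zb^d$-modules
\[
0\to\Zb\to C(Y,\Zb)\to\bigoplus_{j=1}^k\Zb[\Zb^d]\to 0,
\]
where the third map records the jumps at the cut points. Since $H^*(\Zb^d;\Zb[\Zb^d])$ is $\Zb$, concentrated in degree $d$, the long exact sequence gives $H^q(\Zb^d;C(Y,\Zb))\cong H^q(\Zb^d;\Zb)$ for $q<d-1$, and modifies only degrees $d-1$ and $d$, by $\Zb^{k}$ terms. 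Comparing with $K_*(C(\Tb)\rtimes_{\mathrm{rot}}\Zb^d)\cong\Zb^{2^{d-1}}\oplus\Zb^{2^{d-1}}$, this should give $K_0(B)\cong\Zb^{2^{d-1}}\oplus\Zb^{k-1}$ and $K_1(B)\cong\Zb^{2^{d-1}}$, or the reverse, depending on the parity of $d$.

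\emph{Splicing.} The exponential map $K_0(B)\to K_1(I)\cong\Zb^k$ pairs the $\Zb^k$-part with the cut points, with kernel $\Zb^{2^{d-1}}$. Since all groups are torsion-free and in the end free, ranks add, and the exact sequence yields $K_0(A)\cong K_1(A)\cong\Zb^{2^d}$, independently of $k$. Here $k$ must cancel, which is a strong consistency check. I expect this index-map computation, including the parity bookkeeping, to be the main obstacle. As a fallback, I would compare with the $d=1$ case of \cite{PutSchSka86} and induct on $d$ via the Pimsner--Voiculescu sequence applied to $A = (C(\Tb)\rtimes\Zb^{d-1})\rtimes\Zb$.

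\emph{Trace.} Since $K_0(I)=0$, $K_0(A)$ embeds into $K_0(B)$, and $\tau$ factors through the unique trace on $B$, namely the one induced by the unique invariant measure $\mu$ on $Y$ (Theorem~\ref{T-A}(i)). To fix the basis and pairing, I would write down explicit generators: $[1]$, Rieffel-type projections for each $e_i$ (built on $\Tb$, which has no gaps issue) with trace $\mu$ of an arc of measure $\gamma_i$, and higher products with trace zero. The range is $\Zb+\sum_i\gamma_i\Zb$ because $\mu$ of the clopen arcs in $Y$ lies in $\Zb+\sum_i\gamma_i\Zb$. Completing these elements to a basis, as Pimsner--Voiculescu ensures is possible, gives the stated formula for $\tau_*$.
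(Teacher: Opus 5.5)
There is a genuine gap: your computation of $K_*(B)$ is wrong, and the splicing step does not yield the claimed groups.

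First, the sequence $0\to\Zb\to C(Y,\Zb)\to\bigoplus_j\Zb[\Zb^d]\to 0$ is not exact on the right. The jumps of an integer-valued function around the circle sum to zero, so the image of the jump map is the kernel of the augmentation $\bigoplus_j\Zb[\Zb^d]\to\Zb$. Second, the group you compare with, $\Zb^{2^{d-1}}\oplus\Zb^{2^{d-1}}$, is $H^*(\Zb^d;\Zb)\cong K^*(\Tb^d)$. But $C(\Tb)\rtimes_{\mathrm{rot}}\Zb^d$ is a $(d+1)$-dimensional noncommutative torus, with $K_i\cong\Zb^{2^d}$.

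The ranks you obtain for $K_*(B)$ are in fact impossible. Since $K_0(I)=0$, the map $K_0(A)\to K_0(B)$ is injective, so $\operatorname{rk}K_0(B)\ge 2^d$. Your formula gives $2^{d-1}$ when $k=1$; in the Sturmian case $d=k=1$, actually $K_0(B)\cong\Zb^2$. The splicing is also internally inconsistent. Since $K_0(A)\cong\ker\partial$, a kernel $\Zb^{2^{d-1}}$ would force $K_0(A)\cong\Zb^{2^{d-1}}$. And with your ranks, the alternating sum along $0\to K_0(A)\to K_0(B)\to\Zb^k\to K_1(A)\to K_1(B)\to 0$ gives $\operatorname{rk}K_0(A)\neq\operatorname{rk}K_1(A)$. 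For finite $k$ the correct values are $\operatorname{rk}K_0(B)=2^d+k-1$ and $\operatorname{rk}K_1(B)=2^d-1$.

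More fundamentally, rank counting cannot settle the question. You must compute the exponential map $\partial$ itself, showing that its image is exactly the sum-zero sublattice of $K_1(I)$ so that the cokernel is $\Zb$. That is precisely the step you defer. Moreover, $k$ need not be finite (Corollary~\ref{P-ideal} allows $k=\infty$), and then ranks carry no information at all.

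The trace step has the same hole. Once $k\ge2$, it is false that $\mu$ of every clopen arc of $Y$ lies in $\Zb+\sum_i\gamma_i\Zb$. An arc from a gap in the orbit of $J_j$ to a gap in the orbit of $J_{j'}$, $j\neq j'$, has measure in $x_{j'}-x_j+\Zb+\sum_i\gamma_i\Zb$, where $x_j=\varphi(J_j)$. Here $x_{j'}-x_j\notin\Zb+\sum_i\gamma_i\Zb$ exactly because the orbits are distinct. So the trace range on $K_0(B)$ is strictly larger, and bounding $\tau_*(K_0(A))$ again requires knowing which classes lie in $\ker\partial$. You also have not shown that $[1]$, your Rieffel projections and some trace-zero classes form a basis of $K_0(A)$.

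The missing idea, which the paper uses, is to bypass the extension and compare with $A_\theta$ directly. The map $\varphi^*$ induces a unital embedding $\Phi\colon A_\theta\to A$, $f\tilde\lambda_g\mapsto(f\circ\varphi)\lambda_g$, with $\tau\circ\Phi=\tau_\theta$ by Remark~\ref{R-meas}. Induct on $k$ over the crossed products by $\langle e_1,\dots,e_k\rangle$, for both the Denjoy action and the rotation action. Let $\Phi_k$ be the corresponding restriction of $\Phi$, and $\alpha_k$, $\beta_k$ the automorphisms implementing $e_k$. The inductive hypothesis gives $(\alpha_k)_*=(\Phi_{k-1})_*(\beta_k)_*(\Phi_{k-1})_*^{-1}=\mathrm{id}$, because $\beta_k$ is homotopic to the identity through rotations. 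Naturality of the Pimsner--Voiculescu sequence and the five lemma then make $(\Phi_k)_*$ an isomorphism. Elliott's theorem then transports both the groups and the trace pairing.

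Your fallback induction points this way. But without $\Phi$ you have no evident way to see that $\alpha_k$ acts trivially on $K_*$ of the previous stage. A direct homotopy would need an isotopy from $\alpha(e_k)$ to the identity through homeomorphisms commuting with $\alpha(e_1),\dots,\alpha(e_{k-1})$, and no such isotopy is available.
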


For free minimal actions $\Zb^d\curvearrowright \Tb$, the crossed product $C(\Tb)\rtimes \Zb^d$ is a simple noncommutative torus. The $K$-theory of these algebras has been studied by Elliott \cite{Ell84} and Rieffel \cite{Rie88}, and they were later classified by Lin using their Elliott invariant \cite{Lin03}. Our approach to proving Theorem~\ref{T-Kthyi} is to identify a canonical embedding of a noncommutative torus in the crossed products appearing in Theorem~\ref{T-Kthyi} which we show induces an isomorphism of $K$-theory groups that respects the trace pairings (Theorems~\ref{T-Kthy} and \ref{T-Kthy2}). 

Finally, we would also like to point out connections between Theorem~\ref{T-Kthyi} and Bellissard's gap labelling conjecture \cite{Bel86}, and Matui's HK conjecture \cite{Mat15}. It would be interesting to determine if Theorem~\ref{T-Kthyi}, or possible extensions thereof, can be understood within these contexts. 

\medskip
\noindent {\it Acknowledgements.} I thank Shirly Geffen, Grigoris Kopsacheilis and Julian Kranz for their valuable feedback on earlier drafts, and Wilhelm Winter for inspiring discussions on nuclear dimension. I am grateful to the Department of Mathematical Sciences at Chalmers University of Technology and the University of Gothenburg for their hospitality during a research visit where parts of this work were carried out. I would also like to thank the anonymous referee for their careful reading of the manuscript and for providing many useful comments that improved the exposition and helped correct a number of inaccuracies.

\section{Preliminaries}\label{sec:background}

Let $G$ be a countable discrete group with identity $e$, and let $X$ be a locally compact Hausdorff space. A \emph{group action} $\alpha : G \curvearrowright X$ is a group homomorphism $\alpha : G \to \mathrm{Homeo}(X)$. When $\alpha$ is injective, i.e., the action is faithful, we identify $G$ as a subgroup of $\homeo(X)$ and simply write $gx$ for $\alpha(g)(x)$. The induced action on $C_0(X)$ is given by $(gf)(x) = f(g^{-1}x)$. For $x \in X$, the \emph{orbit} of $x$ is $Gx = \{gx : g \in G\}$ and the quotient $X/G$ of $X$ by the equivalence relation of belonging to the same orbit is called the \emph{orbit space}. A subset $Y \subseteq X$ is \emph{$G$-invariant} if $gY \subseteq Y$ for all $g \in G$. A non-empty closed invariant set is \emph{minimal} if it contains no proper non-empty closed invariant subset. The action is \emph{minimal} if $X$ is minimal (if $G$ is infinite, this is equivalent to all orbits being dense). The action is \emph{free} if $gx = x$ implies $g = e$ for all $x\in X$. Two actions $G \curvearrowright X$ and $G \curvearrowright Y$ are \emph{conjugate} if there exists a homeomorphism $\varphi : X \to Y$ such that $\varphi(gx) = g\varphi(x)$ for all $g \in G$ and $x\in X$. We write $M_G(X)$ for the set of $G$-invariant regular Borel probability measures on $X$. The action is \emph{uniquely ergodic} if $M_G(X)$ is a singleton. The \emph{support} of a measure $\mu\in M_G(X)$, denoted $\supp(\mu)$, is the intersection of all closed sets of full $\mu$-measure.

We write $\Tb$ for the unit circle, identified with $\Rb/\Zb$ via the quotient map $\pi : \Rb \to \Rb/\Zb$. A homeomorphism $f : \mathbb T \to \mathbb T$ is \emph{orientation-preserving} if it admits a lift $F : \mathbb R \to \mathbb R$ that is increasing and satisfies $F(x+1) = F(x)+1$ for all $x\in \Rb$. An action $G \curvearrowright \mathbb T$ is orientation-preserving if each $g \in G$ acts by an orientation-preserving homeomorphism.

\begin{prop}[{\cite[Proposition 5.6]{Ghy01}}]\label{P-structure}
	An orientation-preserving\footnote{Note that an orientation-reversing action is never free.} action $G \curvearrowright \mathbb T$ satisfies exactly one of the following:
	\begin{enumerate}
		\item it has a finite orbit;
		\item it is minimal;
		\item it admits a unique minimal set $Y$, homeomorphic to the Cantor set.
	\end{enumerate}
\end{prop}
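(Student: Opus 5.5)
We prove the trichotomy by taking a minimal set $Y\subseteq\Tb$, which exists by Zorn's lemma since $\Tb$ is compact, and studying its topology. The three alternatives are mutually exclusive. If there is a finite orbit, it is a proper closed invariant set, so the action is not minimal. Its closure is also a minimal set that is not a Cantor set. Hence in case (iii) uniqueness would fail, and in any case $Y$ is not homeomorphic to the Cantor set. Also, (ii) and (iii) clearly exclude each other. It therefore suffices to assume that no orbit is finite and that the action is not minimal, and then show that there is a unique minimal set and that it is a Cantor set.

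\emph{Step 1: $Y$ is perfect.} Suppose $Y$ had an isolated point $y$. By minimality $Y=\overline{Gy}$, and every point of $Gy$ is isolated in $Y$, since homeomorphisms preserve isolation. If $Y\setminus Gy$ were non-empty, it would be a proper non-empty closed invariant subset, contradicting minimality. So $Y=Gy$, and this set is compact with every point isolated, hence finite. That gives a finite orbit, which we have excluded. So $Y$ is perfect.

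\emph{Step 2: $Y$ is either $\Tb$ or nowhere dense.} The boundary $\partial Y$ (in $\Tb$) is closed, invariant and contained in $Y$, so by minimality $\partial Y=\emptyset$ or $\partial Y=Y$. If $\partial Y=\emptyset$, then $Y$ is clopen in the connected space $\Tb$, so $Y=\Tb$. Otherwise $Y$ equals its own boundary and so has empty interior. Since the action is not minimal, $Y\neq\Tb$, and therefore $Y$ is a closed, perfect, nowhere dense subset of the circle. Such a set is compact, metrisable, perfect and totally disconnected: any connected subset with more than one point would contain an arc, and $Y$ contains no arc. By Brouwer's characterisation, $Y$ is homeomorphic to the Cantor set.

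\emph{Step 3: uniqueness of $Y$.} This is the main obstacle, and here we use orientation preservation and the order structure of the circle. The complement $\Tb\setminus Y$ is a countable disjoint union of open arcs (the gaps), and $G$ permutes the gaps. Let $Z$ be any minimal set. It suffices to show that $Y\subseteq Z$; minimality of $Y$ then forces $Z=Y$.

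First suppose $Z\cap Y=\emptyset$. Then $Z$ lies in the union of the gaps, and by compactness $Z$ meets only finitely many gaps, since the gaps form an open cover of $Z$. Choose a gap $I$ with $I\cap Z\neq\emptyset$. If the orbit of $I$ were finite, its stabiliser $H$ would have finite index in $G$. Because $H$ preserves orientation, it fixes both endpoints of $I$, which lie in $Y$. Then the $G$-orbit of such an endpoint is finite, a contradiction. So $I$ has infinitely many distinct images $gI$. These images are pairwise disjoint and each meets $Z$ (since $Z$ is invariant), contradicting the fact that $Z$ meets only finitely many gaps.

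Hence $Z\cap Y\neq\emptyset$. This set is closed and invariant, so by minimality of both sets $Z\cap Y=Z$ and $Z\cap Y=Y$, giving $Z=Y$. This completes the argument.
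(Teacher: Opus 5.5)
Your proof is correct. The paper gives no argument of its own and simply cites Ghys, and your route is essentially the standard proof behind that citation: the derived set and the boundary of a minimal set are closed invariant subsets, Brouwer's characterisation gives the Cantor set, and a gap argument gives uniqueness. As a minor aside, you do not need orientation preservation in Step 3, since a finite-index stabiliser $H$ of the gap $I$ permutes its two endpoints in any case, which already makes their $G$-orbits finite.
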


\begin{df}\label{D-Denjoy}
	A \emph{Denjoy action} is an orientation-preserving, non-minimal action 
	$G \curvearrowright \mathbb T$ with all orbits infinite.
\end{df}

\begin{rem}
	We caution the reader that it is common, especially in the $C^*$-algebra literature, to use Denjoy homeomorphism to refer to an action of the integers on the Cantor set obtained by restricting a non-minimal homeomorphism to its unique minimal set. We reserve the term Denjoy for actions on the circle. 
\end{rem}

A \emph{monotone degree one map} is a continuous surjection 
$\varphi : \mathbb T \to \mathbb T$ that admits a non-decreasing lift 
$\tilde{\varphi} : \mathbb R \to \mathbb R$ with 
$\tilde{\varphi}(x+1) = \tilde{\varphi}(x) + 1$ for all $x\in\Rb$. Given actions $\alpha, \beta : G \curvearrowright \mathbb T$, a \emph{semiconjugacy} from $\alpha$ to $\beta$ is a monotone degree one map $\varphi$ such that, for all $g\in G$, we have
\[
\beta(g) \circ \varphi = \varphi \circ \alpha(g). 
\]
Every Denjoy action of a countably infinite group is semiconjugate to a minimal action. Since a description of the semiconjugation map will be useful later, we supply a proof (see also \cite[Proposition 5.8]{Ghy01}). 

\begin{prop}\label{P-semiconj}
    Let $G\curvearrowright\Tb$ be a Denjoy action of a countably infinite discrete group. Then $G\curvearrowright \Tb$ is semiconjugate to a minimal action.
\end{prop}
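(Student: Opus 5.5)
By Proposition~\ref{P-structure}, a Denjoy action has no finite orbit and is not minimal, so it has a unique minimal set $Y\subseteq\Tb$ homeomorphic to the Cantor set. The plan is to collapse the closure of each complementary interval of $Y$ to a point. Concretely, $\Tb\setminus Y$ is a countable disjoint union of open arcs $I_n$, the \emph{gaps} of $Y$. Since $Y$ is invariant and the action is by homeomorphisms, $G$ permutes these gaps. Because $Y$ is perfect, distinct gaps have disjoint closures, so each endpoint of a gap lies in $Y$ and no two gaps share an endpoint.

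Next I would construct $\varphi$ explicitly. Fix a full-support, non-atomic Borel probability measure $\nu$ on $\Tb$ with $\nu(\Tb\setminus Y)=0$. For instance, take the pullback of Lebesgue measure under a Cantor-function-type map, or more canonically push forward via the devil's staircase built from $Y$. Fix a base point $y_0\in Y$ and set $\tilde\varphi(t)=\nu(\pi([\tilde y_0,t]))$ for $t\in[\tilde y_0,\tilde y_0+1]$, extended so that $\tilde\varphi(t+1)=\tilde\varphi(t)+1$. Then $\tilde\varphi$ is continuous because $\nu$ has no atoms, and non-decreasing. It is constant exactly on the closures of the gaps, because $\nu$ gives positive mass to every open arc meeting $Y$, since $Y$ has no isolated points and $\nu$ has support $Y$. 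Hence $\varphi=\pi\circ\tilde\varphi$ is a monotone degree one map whose fibres are exactly the singletons of $Y$ not lying at gap endpoints, together with the closed gaps $\overline{I_n}$. An equivalent, more topological route avoids measures: $\Tb/\!\sim$, where $\overline{I_n}$ is collapsed to a point, is a compact connected $1$-manifold, hence a circle. I would nevertheless prefer the measure construction because it produces the lift directly.

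Then I would define the action $\beta$. Since $G$ maps each $\overline{I_n}$ onto some $\overline{I_m}$, the equivalence relation with classes the fibres of $\varphi$ is $G$-invariant. Therefore each $g$ descends to a bijection $\beta(g)$ of $\Tb$ with $\beta(g)\circ\varphi=\varphi\circ g$. Continuity of $\beta(g)$ follows because $\varphi$ is a closed quotient map from a compact space. The same argument applied to $g^{-1}$ shows $\beta(g)$ is a homeomorphism, and $\beta$ is a homomorphism since $\varphi$ is surjective. Orientation preservation passes down because $\varphi$ is monotone of degree one. Minimality of $\beta$ follows because $\varphi(Y)=\Tb$: if $Z\subseteq\Tb$ is closed, non-empty and $\beta$-invariant, then $\varphi^{-1}(Z)\cap Y$ is closed, non-empty and $G$-invariant, hence equal to $Y$ by minimality, and so $Z=\varphi(Y)=\Tb$.

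The main obstacle is the careful verification that $\tilde\varphi$ is constant precisely on the closed gaps and strictly increasing elsewhere. This amounts to the facts that $Y$ is perfect and nowhere dense, so that gaps have pairwise disjoint closures and $\nu$ charges every arc meeting $Y$. These facts come from $Y$ being a Cantor set, and they are what guarantee that the fibres of $\varphi$ are exactly the $G$-permuted collapsed intervals. The remaining steps are formal.
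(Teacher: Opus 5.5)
Your proposal is correct and follows essentially the paper's argument: collapse each closed gap $\overline{I_n}$ of the Cantor minimal set $Y$ via a Cantor-function-type lift (your distribution function of $\nu$ is exactly such a function adapted to $Y$), descend the action, and get minimality of the quotient action from minimality of $Y$. One small fix: $\nu$ should have support exactly $Y$ rather than ``full support'' (as you use it later), and such a non-atomic $\nu$ can be obtained without circularity by transporting the Bernoulli measure along a homeomorphism $\{0,1\}^{\Nb}\cong Y$.
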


\begin{proof}
    Let $Y\subsetneq \Tb$ be the unique minimal set  as in Proposition~\ref{P-structure}. Then $\Tb\setminus Y = \bigcup_{n\in \Nb} I_n$ where the $I_n$ are open intervals with pairwise disjoint closures (since $Y$ has no isolated points). Define an equivalence relation $\sim$ on $\Tb$ by $x\sim y$ if and only if there exists $n\in \Nb$ such that $\{x,y\}\subseteq \overline{I_n}$. Letting $\varphi : \Tb \to \Tb/{\sim}$ be the quotient map, it is readily checked that $\Tb/{\sim}$ is homeomorphic to $\Tb$ and that $\varphi$ is a monotone degree one map. Indeed, one may take $\tilde{\varphi} : \Rb \to \Rb$ to be the Cantor function \cite[\S 2.7]{RoyFit10} (extended in the obvious manner to a function on $\Rb$), which is constant on each interval of $\Tb\setminus Y$. Since $Y$ is invariant, $G\curvearrowright \Tb$ descends to a well-defined action on $\Tb/{\sim}$ which intertwines the map $\varphi$. The induced action on $\Tb/{\sim}$ is minimal since orbits of points in $\Tb$ are dense in $Y$. 
\end{proof}  

Let $f : \Tb \to \Tb$ be orientation-preserving, and $F : \Rb \to \Rb$ a lift of $f$. Then 
\[
\rho(f) := \lim_{n\to\infty} \frac{F^n(x)}{n} \mod \mathbb Z
\]
exists and is independent of $x$ and the lift. We call $\rho(f) \in \mathbb T$ the \emph{rotation number} of $f$, introduced by Poincar\'e \cite[p.~233]{Poi90}. We often identify $\rho(f)$ via some lift with a real number $\rho(f)\in [0,1)$. For an orientation-preserving group action $G\curvearrowright \mathbb{T}$, we consider the map $\rho : G \to \mathbb{T}$ assigning $g\in G$ to its corresponding rotation number. While $\rho$ is not a homomorphism in general, it is for actions of amenable groups. In fact much more is true: 

\begin{prop}[{\cite[Proposition 6.17]{Ghy01}}]\label{P-amen}
	Let $G$ be an amenable group and $G\curvearrowright \Tb$ an orientation-preserving action. Then the rotation number map $\rho : G \to \mathbb{T}$ is a group homomorphism. If $\rho(G)$ is finite, then $G\curvearrowright \mathbb{T}$ has a finite orbit. Otherwise, the action is semiconjugate to the minimal rotation action given by $g\mapsto R_{\rho(g)}$, where $R_{\rho(g)}$ denotes the rotation homeomorphism by $\rho(g)$. This semiconjugacy is a conjugacy if and only if $G\curvearrowright \Tb$ is minimal. 
\end{prop}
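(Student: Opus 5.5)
We prove the three assertions of Proposition~\ref{P-amen} in order, with the homomorphism property carrying most of the weight.

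\textbf{Step 1: $\rho$ is a homomorphism.} Since $G$ is amenable, it fixes a probability measure $\mu\in M_G(\Tb)$; take an invariant mean on $G$ and average the Dirac mass $\delta_x$, using compactness of the space of probability measures on $\Tb$. Lift $\mu$ to a $\Zb$-periodic measure $\tilde\mu$ on $\Rb$. Fix a basepoint $x_0$ and set
\[
\Phi(x)=\tilde\mu([x_0,x)) \quad\text{for } x\ge x_0,
\]
extended in the obvious way to $x<x_0$. Then $\Phi$ is non-decreasing and $\Phi(x+1)=\Phi(x)+1$, so $\Phi$ descends to a monotone degree one map $\varphi:\Tb\to\Tb$. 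For $g\in G$ with lift $F_g$, invariance of $\mu$ gives that $\Phi(F_g(x))-\Phi(x)$ is a constant $c_g$. Iterating, $\Phi(F_g^n(x))=\Phi(x)+nc_g$, and $\Phi$ is within bounded distance of the identity. Hence $\rho(g)=c_g \bmod \Zb$. Since $F_{gh}$ and $F_g\circ F_h$ differ by an integer, $c_{gh}\equiv c_g+c_h$, so $\rho$ is a homomorphism. The same computation gives
\[
\varphi\circ g = R_{\rho(g)}\circ\varphi,
\]
so $\varphi$ is a semiconjugacy to the rotation action $g\mapsto R_{\rho(g)}$.

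\textbf{Step 2: finite image gives a finite orbit.} Suppose $\rho(G)$ is finite. Then $\rho(G)=\tfrac1q\Zb/\Zb$ for some $q$. Pick $y$ in the image of $\supp\mu$ under $\varphi$. The rotation orbit of $y$ is finite, with $q$ points. The preimage under $\varphi$ of each of these points is a point or a closed arc, and $G$ permutes these preimages. The endpoints of the preimage arcs therefore form a finite $G$-invariant set, which is a finite orbit.

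\textbf{Step 3: infinite image gives semiconjugacy, with conjugacy iff minimal.} Suppose $\rho(G)$ is infinite. Then $\rho(G)$ is dense in $\Tb$, so the rotation action is minimal, and $\varphi$ from Step~1 is the required semiconjugacy. For the last assertion, note first that if $\varphi$ is a conjugacy, the action is conjugate to a minimal rotation and hence minimal. Conversely, suppose the action is minimal. Then $\supp\mu=\Tb$, since the support is closed, nonempty and invariant. So $\Phi$ is strictly increasing. It has no jumps, because an atom of $\mu$ would force an infinite orbit of atoms of equal positive mass, which is impossible. Hence $\varphi$ is a homeomorphism, i.e.\ a conjugacy.

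\textbf{Main obstacle.} The key point is Step~1: the constancy of $\Phi\circ F_g-\Phi$. This needs care with atoms of $\mu$, which are handled by fixing half-open interval conventions consistently, and with the choice of lift modulo $\Zb$.
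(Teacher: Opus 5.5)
The paper quotes this result from Ghys without proof, and your invariant-measure route is the standard one. The part you single out as the main obstacle is actually fine. For $x<y$ one has $\Phi(y)-\Phi(x)=\tilde\mu([x,y))$, and $F_g$ preserves $\tilde\mu$ and maps $[x,y)$ onto $[F_gx,F_gy)$. So $\Phi\circ F_g-\Phi$ is constant even when $\mu$ has atoms, and the homomorphism property follows as you say.

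\textbf{The gap is continuity.} $\Phi$ is continuous exactly when $\mu$ has no atoms. If $\mu$ has an atom, $\Phi$ jumps, and $\varphi$ is neither continuous nor in general surjective. For instance, if $p$ is a global fixed point and $\mu=\delta_p$, then $\Phi$ is integer-valued and $\varphi$ is constant. So the claim closing Step~1, that $\varphi$ is a monotone degree one map and hence a semiconjugacy, is false in general.

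This matters in two places:
\begin{enumerate}
\item Step~3 uses this $\varphi$ as the semiconjugacy for every action with $\rho(G)$ infinite. You only exclude atoms under the extra hypothesis of minimality. The non-minimal case with $\rho(G)$ infinite is exactly the Denjoy case the rest of the paper depends on, and it is left unjustified.
\item Step~2 has the same defect. When $\mu$ has atoms, the fibres of $\varphi$ can be half-open arcs or all of $\Tb$; in the example above there are no endpoints at all. So the endpoint argument does not apply as written.
\end{enumerate}

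\textbf{The repair is a dichotomy made at the outset.} Suppose $\mu(\{x\})>0$. Invariance makes every point of $Gx$ an atom of the same mass, so $Gx$ is a finite orbit, say with $q$ points. This settles Step~2 immediately. Now run your Step~1 computation with the normalised counting measure on $Gx$, which is also $G$-invariant. It produces a $\Phi$ with values in $\tfrac1q\Zb$, so the constants $c_g$ lie in $\tfrac1q\Zb$ and $\rho(G)\subseteq\tfrac1q\Zb/\Zb$ is finite.

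Consequently, when $\rho(G)$ is infinite the measure $\mu$ is atomless. Then $\Phi$ is continuous, and it is surjective by the intermediate value theorem together with $\Phi(x+1)=\Phi(x)+1$. So $\varphi$ is a genuine monotone degree one map intertwining the two actions. Likewise, in Step~2 with $\mu$ atomless, the fibres of $\varphi$ are points or proper closed arcs, and your endpoint argument goes through.

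With this inserted, the remaining ingredients are correct: the density of infinite subgroups of $\Tb$, and the argument that the semiconjugacy is a conjugacy if and only if the action is minimal.
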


\section{The results}

\subsection{A short exact sequence}\label{sec:extension}

We introduce the fundamental extension of $C^*$-algebras associated with a Denjoy action, following the approach outlined in the introduction. 

Let $G$ be a countably infinite discrete group, and let $G\curvearrowright \Tb$ be a Denjoy action on the circle with unique minimal set $Y\subsetneq \Tb$. Since $Y$ is $G$-invariant, we obtain a $G$-equivariant short exact sequence 
\begin{equation}
    0 \longrightarrow C_0(\Tb\setminus Y) \stackrel{\tilde{\iota}}{\longrightarrow} C(\Tb) \stackrel{\tilde{q}}{\longrightarrow} C(Y) \longrightarrow 0\label{exact_1}
\end{equation}
where $\tilde{\iota}$ is the inclusion map and $\tilde{q}$ the restriction map. By definition, when $G$ is exact (note that all amenable groups are exact) (\ref{exact_1}) yields a short exact sequence of $C^*$-algebras 
\begin{equation} 
    0 \longrightarrow C_0(\Tb\setminus Y)\rtimes_r G \stackrel{\iota}{\longrightarrow} C(\Tb)\rtimes_r G \stackrel{q}{\longrightarrow} C(Y)\rtimes_r G \longrightarrow 0.\label{exact_2}
\end{equation}
The map $\iota$ is the natural inclusion map coming from $\tilde{\iota}$ and $q$ is the quotient map satisfying $q(f\lambda_g) = \tilde{q}(f)\lambda_g$ for $f\in C(\Tb)$ and $g\in G$, where $\lambda_g$ is the canonical unitary in $C(\Tb)\rtimes G$. Whenever $G$ is amenable, we identify the reduced and full crossed product $C^*$-algebras via the canonical isomorphism. 

Proposition~\ref{P-amen} allows for a precise description of the dynamics of a Denjoy action in the case when the acting group is abelian (cf.\ \cite[\S 3]{PutSchSka86}). This leads to an explicit description of the ideal appearing in (\ref{exact_2}). 

\begin{df}\label{D-proper}
    Let $G\curvearrowright X$ be an action of a discrete group $G$ on a locally compact Hausdorff space $X$. The action is \emph{proper} if for every non-empty compact subset $K\subseteq X$, the set $G_K = \{g\in G : gK\cap K \ne \emptyset\}$ is finite. 
\end{df}

\begin{prop}\label{P-proper}
    Let $G$ be a countably infinite discrete abelian group, let $G\curvearrowright \Tb$ be a free Denjoy action, and let $Y\subseteq \Tb$ be its unique minimal set. Then the restricted action $G\curvearrowright \Tb\setminus {Y}$ is free and proper. 
\end{prop}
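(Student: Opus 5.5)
Freeness of the restricted action is immediate, since $\Tb\setminus Y$ is $G$-invariant (as $Y$ is) and the action on all of $\Tb$ is free. The content is properness. I will first reduce to the case of a compact set inside a single gap interval. Write $\Tb\setminus Y=\bigsqcup_{n} I_n$ with the $I_n$ open intervals, as in the proof of Proposition~\ref{P-semiconj}. A compact $K\subseteq \Tb\setminus Y$ is covered by finitely many $I_n$, so $K\subseteq \bigcup_{j=1}^m K_j$ with each $K_j$ a compact subset of some $I_{n_j}$. Then $G_K\subseteq\bigcup_{j,k}\{g : gK_j\cap K_k\neq\emptyset\}$, so it suffices to show that for any two gap intervals $I,J$ the set $\{g\in G: gI\cap J\ne\emptyset\}$ has at most one element (or is finite).

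Each $g\in G$ maps $Y$ onto itself and hence permutes the components of $\Tb\setminus Y$. Consequently $gI\cap J\neq\emptyset$ forces $gI=J$. If $gI=J=hI$, then $h^{-1}g$ fixes the interval $I$ setwise. So the whole problem reduces to showing that the stabiliser of each gap $I$ is trivial. This stabiliser step is the main obstacle.

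I would handle the stabiliser step using Proposition~\ref{P-amen}. Since $G$ is abelian, hence amenable, the rotation number map $\rho$ is a homomorphism. The action is free, so no finite orbit exists, and hence $\rho(G)$ is infinite. The action is therefore semiconjugate, via the map $\varphi$ that collapses each $\overline{I_n}$ to a point, to the rotation action $g\mapsto R_{\rho(g)}$. The semiconjugacy of Proposition~\ref{P-amen} is unique up to rotation, and any semiconjugacy to a minimal action must collapse exactly the gaps of the minimal set, so it agrees with the one from Proposition~\ref{P-semiconj}.

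Now suppose $gI=I$ with $g\neq e$. Then $g$ fixes the point $\varphi(\overline I)$ under $R_{\rho(g)}$, so $\rho(g)=0$ and $\varphi\circ g=\varphi$. In particular $g$ fixes the endpoints of $\overline I$: being orientation-preserving with $g\overline I=\overline I$, it fixes both endpoints. Those endpoints lie in $Y\subseteq\Tb$, so this contradicts freeness. Hence $g=e$, the stabiliser is trivial, and $G_K$ has at most $m^2$ elements, proving properness.

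I do not even need that $\rho$ is injective. The endpoint argument alone suffices: $g$ preserving $I$ implies that $g$ fixes the endpoints of $I$. This holds because an orientation-preserving homeomorphism mapping an open arc onto itself fixes its endpoints. Thus the argument reduces to elementary topology plus freeness, and the appeal to Proposition~\ref{P-amen} is only a sanity check that such stabilisers cannot arise.
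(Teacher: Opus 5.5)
Your proof is correct, but it handles the key step differently from the paper. The reduction to gap intervals and the final count $|G_K|\le m^2$ match the paper. The difference is in how the disjointness (\ref{E-cov}), $gI\cap I=\emptyset$ for $g\neq e$, is proved.

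The paper argues dynamically. It invokes Proposition~\ref{P-amen} to semiconjugate to the rotation action $g\mapsto R_{\rho(g)}$. It then deduces injectivity of $\rho$ from freeness, using that $\rho(g)=0$ forces a fixed point. Finally it observes that $\varphi(gI)=y+\rho(g)\neq y=\varphi(I)$.

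You argue topologically. Each $g$ permutes the components of $\Tb\setminus Y$, so it suffices that gap stabilisers are trivial. An orientation-preserving $g$ with $gI=I$ must fix both endpoints of $I$, since swapping them would send the arc $[a,b]$ onto the complementary arc $[b,a]$. A fixed point contradicts freeness.

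What each route buys:
\begin{itemize}
\item Your route is more elementary. It uses no rotation-number theory and no amenability.
\item It also never needs to identify the semiconjugacy of Proposition~\ref{P-amen} with the gap-collapsing map of Proposition~\ref{P-semiconj}. The paper's argument relies on that identification, and your third paragraph asserts it without proof.
\item The paper's route gives the explicit picture that the gap $gI$ collapses to the point $y+\rho(g)$. This ties the orbit of gaps to the rotation action, a viewpoint the paper reuses in Remark~\ref{R-meas} and Proposition~\ref{P-embed}.
\end{itemize}

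Two small fixes to your write-up:
\begin{itemize}
\item Your last paragraph makes paragraphs three and four dispensable, so present the endpoint argument as the proof and drop them.
\item In paragraph four, ``$\varphi\circ g=\varphi$'' does not by itself imply that $g$ fixes the endpoints of $I$, since $\varphi$ is constant on $\overline{I}$. It is the orientation argument that does the work there.
\end{itemize}
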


\begin{proof}
Since $G$ is amenable, by Proposition~\ref{P-amen}, $G\curvearrowright \Tb$ is semiconjugate to the minimal action $g\mapsto R_{\rho(g)}$. Moreover, freeness implies that $\rho$ is injective. In fact, $\rho(g) = 0$ if and only if the $\Zb$-action associated with $g$ has a fixed point, so that for a free action, $\rho(g) = 0$ implies $g = e$. Fix an open interval $I\subseteq \Tb \setminus Y$. We first show that 
\begin{equation}
	g I\cap I = \emptyset \quad \text{for all } g\in G\setminus \{e\}. \label{E-cov}
\end{equation} 
By the proof of Proposition~\ref{P-semiconj}, there exists a monotone degree one map $\varphi : \Tb\to\Tb$ sending $I$ to some point $y\in \Tb$. By Proposition~\ref{P-amen} and the definition of $\varphi$, we have $\varphi(gI) = R_{\rho(g)}(y) = \{y + \rho(g)\}$, so by injectivity of $\rho$ we have $y + \rho(g) \ne y$ for all $g\in G\setminus \{e\}$. We deduce that their preimages $gI = \varphi^{-1}(y+\rho(g))$ and $I = \varphi^{-1}(y)$ are disjoint. 

Now let $K\subseteq\mathbb T\setminus Y$ be compact. Since $\mathbb T\setminus Y = \bigcup_{n\in\Nb} I_n$ is covered by open intervals, compactness yields a finite subcover of $K$, say $K\subseteq\bigcup_{j=1}^m I_{n_j}$. Suppose $g\in G$ satisfies $gK\cap K\ne \emptyset$. Then there exist $i,j\in \{1,\dots,m\}$ and points $x\in K\cap I_{n_i}$, $y\in K\cap I_{n_j}$ with $gx=y$. Consequently $gI_{n_i}\cap I_{n_j}\neq \emptyset$ and it follows that $gI_{n_i}=I_{n_j}$. For each ordered pair $(i,j)$, by (\ref{E-cov}) we have $|\{g\in G : gI_{n_i} = I_{n_j}\}| \le 1$. Therefore $G_K\subseteq \bigcup_{i,j=1}^m\{g\in G : gI_{n_i} = I_{n_j}\}$ has cardinality at most $m^2$. Thus the action $G\curvearrowright\mathbb T\setminus Y$ is proper. 

Finally, the action $G\curvearrowright \Tb\setminus Y$ is free, being the restriction of a free action. 
\end{proof}

\begin{cor}\label{P-ideal}
    Let $G$ be a countably infinite discrete abelian group and let $G\curvearrowright \Tb$ be a free Denjoy action. Then the ideal $C_0(\Tb\setminus Y)\rtimes G$ as in (\ref{exact_2}) is isomorphic to $\bigoplus_{i=1}^{k} C_0(\Rb)\otimes \Kc(\ell^2(G))$, where $k \in \Nb \cup \{\infty\}$ is the number of (necessarily disjoint) orbits of open intervals in $\mathbb{T}\setminus Y$.
\end{cor}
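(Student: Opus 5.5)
Since the action is free and proper on $\Tb\setminus Y$ (Proposition~\ref{P-proper}), the plan is to decompose $\Tb\setminus Y$ into $G$-invariant clopen pieces, one for each orbit of intervals, and to identify each piece equivariantly with $\Rb\times G$ carrying the translation action of $G$ on the second factor.

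First we record the combinatorics of the intervals. Each $g\in G$ acts by an orientation-preserving homeomorphism fixing $Y$ setwise. Hence $g$ permutes the connected components $\{I_n\}$ of $\Tb\setminus Y$, so $G$ acts on the countable index set $\Nb$. By (\ref{E-cov}), $gI_n\neq I_n$ for $g\ne e$, and the stabiliser of each $I_n$ is trivial. Each orbit of intervals is therefore in bijection with $G$ via $g\mapsto gI_{n}$ for a chosen representative $I_{n}$. Choose representatives $I_{n_1},I_{n_2},\dots$, one from each of the $k$ orbits, and set $U_i=\bigcup_{g\in G} gI_{n_i}$. The $U_i$ are disjoint open $G$-invariant sets covering $\Tb\setminus Y$. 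Since each is open and their union is $\Tb\setminus Y$, each $U_i$ is also closed in $\Tb\setminus Y$. This gives a $G$-equivariant decomposition
\[
C_0(\Tb\setminus Y)\cong \bigoplus_{i=1}^k C_0(U_i),
\]
where the direct sum is the $c_0$-direct sum when $k=\infty$. Crossed products by a discrete (amenable) group commute with such $c_0$-direct sums of invariant ideals, so
\[
C_0(\Tb\setminus Y)\rtimes G\cong\bigoplus_i C_0(U_i)\rtimes G.
\]

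Next, fix $i$ and a homeomorphism $h\colon\Rb\to I_{n_i}$. The map $\Rb\times G\to U_i$ given by $(t,g)\mapsto g\,h(t)$ is a continuous bijection, because $G$ acts freely on the set of intervals in the orbit. It is open because each $g$ maps $I_{n_i}$ homeomorphically onto $gI_{n_i}$, and the sets $gI_{n_i}$ are open in $U_i$. So this map is a homeomorphism. It intertwines the action of $G$ on $U_i$ with the action $g'(t,g)=(t,g'g)$ on $\Rb\times G$. Consequently
\[
C_0(U_i)\rtimes G\cong (C_0(\Rb)\otimes c_0(G))\rtimes G\cong C_0(\Rb)\otimes (c_0(G)\rtimes G),
\]
where $G$ acts trivially on $C_0(\Rb)$ and by left translation on $c_0(G)$.

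Finally, we use the standard fact that $c_0(G)\rtimes G\cong\Kc(\ell^2(G))$ for the translation action, which is the Stone--von Neumann type theorem (or Green's imprimitivity theorem for $G/\{e\}$). One can see this concretely: the regular covariant representation given by multiplication operators and $\lambda_g$ is faithful, and its image contains the matrix units $\delta_x\lambda_{x y^{-1}}$ with dense span in $\Kc$. Summing over $i$ gives the stated isomorphism. The main obstacle, though it is mild, is justifying the two identifications rigorously: that the crossed product commutes with the possibly infinite direct sum (true since $\rtimes G$ preserves inductive limits and finite direct sums), and that $c_0(G)\rtimes G\cong\Kc(\ell^2(G))$ holds for the full crossed product. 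The latter is harmless here because $G$ is amenable.
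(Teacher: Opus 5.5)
Your proof is correct. It shares the paper's skeleton: identify the orbit space with one representative gap per orbit, obtain $C_0(\text{gaps})\otimes\Kc(\ell^2(G))$, and split over components. The difference is how you reach the key isomorphism.

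The paper combines Proposition~\ref{P-proper} with the remark that $Y_0\cong(\Tb\setminus Y)/G$. It then cites Green's theorem \cite[Corollary 15]{Gre77} for free and proper actions to get $C_0(\Tb\setminus Y)\rtimes G\cong C_0(Y_0)\otimes\Kc(\ell^2(G))$ in one step, and only afterwards splits $C_0(Y_0)$ over the intervals.

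You instead split at the level of the $G$-space into the invariant clopen pieces $U_i$. You write down the explicit equivariant homeomorphism $\Rb\times G\to U_i$, $(t,g)\mapsto g\,h(t)$. Everything then reduces to three elementary facts:
\begin{enumerate}
\item $c_0(G)\rtimes G\cong\Kc(\ell^2(G))$;
\item crossed products commute with $c_0$-direct sums of invariant ideals;
\item crossed products commute with tensoring by the trivially acted-on nuclear algebra $C_0(\Rb)$.
\end{enumerate}

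The paper's route is shorter and places the ideal in the general framework of crossed products by free proper actions. Yours is self-contained and needs only (\ref{E-cov}), i.e.\ freeness of $G$ on the set of gaps, not the full properness statement. It also makes explicit that $\Tb\setminus Y\to(\Tb\setminus Y)/G$ is a trivial covering. That fact stands behind the paper's identification $Y_0\cong(\Tb\setminus Y)/G$ and the untwisted tensor-product form of the answer.
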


\begin{proof}
    Since $Y\subseteq \Tb$ is invariant, every interval in $\Tb\setminus Y$ is mapped onto another. Let $k \in \Nb\cup \{\infty\}$ be the number of orbits of intervals in $\Tb\setminus Y$. Let $I_{n_1},\ldots,I_{n_k}$ be representatives for each of these orbits of intervals, and put $Y_0 := \bigcup_{i = 1}^k I_{n_i}$. Evidently, $Y_0 \cong (\Tb\setminus Y)/G$ and so by Proposition~\ref{P-proper}, we may apply Green's theorem \cite[Corollary 15]{Gre77} to obtain an isomorphism $C_0(\Tb\setminus Y) \rtimes G \cong C_0(Y_0)\otimes \Kc(\ell^2(G))$. Since the intervals $I_{n_i}$ belong to disjoint orbits, we have 
    \[
        C_0(\Tb\setminus Y) \rtimes G \cong \bigoplus_{i=1}^k C_0(I_{n_i}) \otimes \Kc(\ell^2(G)) \cong \bigoplus_{i=1}^k C_0(\Rb)\otimes \Kc(\ell^2(G)), 
    \]
which completes the proof. 
\end{proof}

\begin{rem}\label{R-quotient}
	Using classification machinery, the structure of the quotient appearing in (\ref{exact_2}) may also be described rather explicitly. Indeed, it is a simple separable unital nuclear $C^*$-algebra that is isomorphic to an inductive limit of homogeneous $C^*$-algebras of topological dimension at most two (see, e.g., \cite[Theorem 8.2]{KerSza20}). In the special case $G = \mathbb{Z}$, it is isomorphic to an A$\Tb$ algebra \cite{Put89}.  
\end{rem}

\subsection{Traces and ideal structure}\label{sec:trace_ideal}

In this section we prove that crossed products of free actions of abelian groups on the circle have a unique tracial state, and determine the ideal structure for free Denjoy actions. By a trace we mean a tracial state and an ideal refers to a closed two-sided ideal. 

\begin{prop}\label{P-unique erg}
    Every free action $G\curvearrowright \Tb$ of a countably infinite discrete abelian group is uniquely ergodic. 
\end{prop}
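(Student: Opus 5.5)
Since the action is free, it is orientation-preserving (orientation-reversing homeomorphisms have fixed points), and $G$ is abelian, hence amenable. By Proposition~\ref{P-amen}, $\rho : G\to\Tb$ is a homomorphism. As in the proof of Proposition~\ref{P-proper}, freeness makes $\rho$ injective: $\rho(g)=0$ forces $g$ to have a fixed point, which gives $g=e$. Since $G$ is infinite, $\rho(G)$ is an infinite, hence dense, subgroup of $\Tb$. Proposition~\ref{P-amen} then supplies a semiconjugacy $\varphi:\Tb\to\Tb$ (a monotone degree one map) from $G\curvearrowright\Tb$ to the rotation action $g\mapsto R_{\rho(g)}$. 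The rotation action by a dense subgroup is uniquely ergodic, with Lebesgue measure $\lambda$ as its only invariant measure. To see this, let $\nu$ be invariant. Then $\nu$ is invariant under the closure of $\rho(G)$, which is all of $\Tb$, because $t\mapsto \int f(x+t)\,d\nu(x)$ is continuous for $f\in C(\Tb)$. Hence $\nu$ is translation invariant, and its Fourier coefficients satisfy $\hat\nu(n)=e^{2\pi i n t}\hat\nu(n)$ for every $t$. This forces $\hat\nu(n)=0$ for $n\neq 0$, so $\nu=\lambda$.

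Now let $\mu\in M_G(\Tb)$, which is non-empty by amenability. The pushforward $\varphi_*\mu$ is invariant for the rotation action, so $\varphi_*\mu=\lambda$. It remains to show that $\mu$ is determined by $\varphi_*\mu$. If the action is minimal, $\varphi$ is a conjugacy and we are done. Otherwise the action is Denjoy. Write $\Tb\setminus Y=\bigcup_n I_n$ as in Proposition~\ref{P-semiconj}, where $\varphi$ collapses each $\overline{I_n}$ to a point and is injective elsewhere. The key step is to show $\mu(\Tb\setminus Y)=0$. By (\ref{E-cov}), the translates $gI_n$ for $g\in G$ are pairwise disjoint, and they all have measure $\mu(I_n)$. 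Since $G$ is infinite, countable additivity gives $\mu(I_n)=0$, and therefore $\mu(\Tb\setminus Y)=0$. Alternatively, one can note that $\supp\mu$ is a closed invariant set and use non-wandering arguments, but the disjointness argument is direct.

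So $\mu$ lives on $Y$, and on $Y$ the map $\varphi$ is injective except at the countably many pairs of endpoints of the $I_n$. Points carry no $\mu$-mass, since a point's orbit is infinite and consists of points of equal mass. Hence $\varphi$ restricted to a full-measure Borel subset of $Y$ is a Borel isomorphism onto its image. It follows that $\mu(B)=\lambda(\varphi(B\cap Y'))$ for Borel $B$, where $Y'$ is $Y$ with the endpoints removed. Concretely, for an arc $J$ with endpoints in $Y$ but not endpoints of gaps, $\mu(J)=\lambda(\varphi(J))$. This pins down $\mu$ uniquely. The main care needed is this measure-theoretic transfer through $\varphi$, together with the step $\mu(I_n)=0$, which relies on the disjointness (\ref{E-cov}) established earlier.
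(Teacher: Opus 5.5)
Your proof is correct and follows the same route as the paper. Existence comes from amenability and the minimal case from conjugacy to a dense rotation group; in the Denjoy case you push forward to Lebesgue measure, kill the gaps using the disjointness (\ref{E-cov}), and transfer the measure back through $\varphi$ on $Y$. Your last step is more careful than the paper's: it calls $\varphi|_Y$ a Borel isomorphism even though $\varphi$ identifies the two endpoints of each gap, and your observation that $\mu$ has no atoms (infinite orbits of equal-mass points) is what makes that step rigorous.
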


\begin{proof}
    By Proposition~\ref{P-structure} it suffices to consider the following two cases. In both cases, existence of at least one probability measure $\mu \in M_G(\Tb)$ is guaranteed by amenability of $G$, and so it suffices to show uniqueness. 
    
    If $G\curvearrowright \Tb$ is minimal, then by Proposition~\ref{P-amen}, $G\curvearrowright \Tb$ is conjugate to an action by rotations. Furthermore, the set of angles of the rotation is dense in $\Tb$ by minimality. Clearly this implies that the Lebesgue measure is the unique invariant measure. Since conjugacy preserves unique ergodicity, the original action $G\curvearrowright \Tb$ is uniquely ergodic. 

    If $G\curvearrowright \Tb$ is not minimal, it must be a free Denjoy action with unique minimal set $Y\subsetneq \Tb$. To prove uniqueness, let $\nu\in M_G(\Tb)$. Proposition~\ref{P-semiconj} provides a semiconjugacy $\varphi : \Tb \to \Tb$ to a minimal action $\beta : G\curvearrowright \Tb$. The pushforward measure $\varphi_*\nu$ is $\beta$-invariant. As $\beta$ is minimal and each $\beta(g)$ acts by irrational rotations, $\varphi_*\nu = \lambda$. By the continuity and monotonicity of $\varphi$, its fibres, i.e., preimages $\varphi^{-1}(z)$ of points $z\in \Tb$ are either singletons or closed intervals on which $\varphi$ is constant. Let $I$ be a connected component of $\Tb\setminus Y$. By (\ref{E-cov}), the sets $\{gI : g\in G\}$ are disjoint. By $G$-invariance of $\nu$, this implies that $\nu(\bigcup_{g\in G} gI) = \sum_{g\in G} \nu(gI) = \sum_{g\in G} \nu(I)$, which is unbounded when $G$ is infinite unless $\nu(\Tb\setminus Y) = 0$. Thus $\supp(\nu) \subseteq Y$. The map $\varphi|_Y : Y\to\Tb$ is a Borel isomorphism and so since $\varphi_*\mu = \lambda = \varphi_*\nu$ and both $\mu$ and $\nu$ are supported on $Y$, we deduce that $\mu = (\varphi|_Y)_*^{-1}\lambda = \nu$. 
\end{proof}

\begin{rem}\label{R-meas}
	For Denjoy actions $G\curvearrowright \mathbb{T}$, the unique invariant measure $\mu$ in Proposition~\ref{P-unique erg} can be described explicitly. Indeed, $\mu$ is the image under the quotient map $\pi : \Rb \to \Tb$ of the Lebesgue--Stieltjes measure for the map $\tilde{\varphi} : \mathbb{R} \to \mathbb{R}$ corresponding to the semiconjugacy $\varphi : \Tb \to \Tb$. It follows that for every $g\in G,z\in \Tb$, we have $\mu((z,gz]) = \rho(g)$ (see \cite[Theorem 2.2.10]{Nav11}).   
\end{rem}

\begin{lemma}\label{L-minimal supp}
    Let $G\curvearrowright X$ be an action of a discrete amenable group on a compact Hausdorff space $X$ and suppose that $M_G(X) = \{\mu\}$. Then $\supp(\mu)$ is the unique minimal set. In particular, $G\curvearrowright X$ is minimal if and only if $\mu$ has full support.  
\end{lemma}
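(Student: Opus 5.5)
The plan is to prove two inclusions: every minimal set contains $\supp(\mu)$, and $\supp(\mu)$ is itself a closed invariant set contained in every minimal set. Together these show that $\supp(\mu)$ is the unique minimal set.

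First I will check that $\supp(\mu)$ is a non-empty closed $G$-invariant set. It is closed by definition, being an intersection of closed sets. It is non-empty because $\mu(X)=1$: by regularity, the complement of $\supp(\mu)$ is the union of all open $\mu$-null sets, and this union is itself $\mu$-null. To see this, take any compact $K$ inside the union; it is covered by finitely many open null sets, so $\mu(K)=0$, and inner regularity on open sets gives measure zero for the whole union. Invariance follows because $g$ maps open null sets to open null sets, since $\mu(gU)=\mu(U)$. Hence $g(\supp\mu)=\supp\mu$.

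Next, let $Z\subseteq X$ be any minimal set. Since $Z$ is compact and $G$ is amenable, the restricted action $G\curvearrowright Z$ admits an invariant probability measure $\nu$. Extending $\nu$ by zero gives an element of $M_G(X)$, so by uniqueness $\nu=\mu$, and therefore $\mu(X\setminus Z)=0$. Since $X\setminus Z$ is open and null, we get $\supp(\mu)\subseteq Z$. Minimality of $Z$, together with the fact that $\supp(\mu)$ is non-empty, closed and invariant, then forces $\supp(\mu)=Z$. Thus every minimal set equals $\supp(\mu)$. Minimal sets exist by Zorn's lemma and compactness, so $\supp(\mu)$ is the unique minimal set.

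For the \emph{in particular} statement, suppose first that the action is minimal. Then $X$ is the unique minimal set, so $\supp(\mu)=X$, which means $\mu$ has full support. Conversely, if $\supp(\mu)=X$, then $X$ is the unique minimal set, which is exactly the definition of a minimal action. The only step needing some care is the regularity argument that the complement of the support is null. This is standard, and I would simply cite it or sketch it via inner regularity as above.
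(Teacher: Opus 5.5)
Your proof is correct and uses the same key step as the paper: amenability gives an invariant probability measure on a minimal set, unique ergodicity identifies it with $\mu$, and so $\supp(\mu)$ lies in, and hence equals, that minimal set. The difference is only organisational. You run the argument for an arbitrary minimal set $Z$, which yields existence and uniqueness at once, whereas the paper first takes a minimal set inside $\supp(\mu)$ and then rules out a second one by a separate disjointness argument; you also supply the regularity argument for $\supp(\mu)\neq\emptyset$ that the paper takes for granted.
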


\begin{proof}
     Since $\supp(\mu)$ is a non-empty closed (hence compact) and $G$-invariant set, it contains a minimal set $Y\subseteq \supp(\mu)$. Since $G$ is amenable, there exists a Borel measure $\nu \in M_G(Y)\subseteq M_G(X)$ such that $\nu(Y) = 1$. By unique ergodicity, we have $\nu = \mu$ and hence $\mu(Y) = 1$. It follows from the definition of $\supp(\mu)$ that $\supp(\mu)\subseteq Y$, hence $\supp(\mu) = Y$ is a minimal set. If $Y_1,Y_2\subseteq X$ are two minimal sets then since $Y_1\cap Y_2$ is also closed and $G$-invariant, either $Y_1 = Y_2$ or $Y_1$ and $Y_2$ are disjoint. If $Y_1$ and $Y_2$ are disjoint, then $\mu_i := \mu|_{Y_i}$ determine two distinct probability measures, contradicting uniqueness of $\mu$. Hence $\supp(\mu)$ is the unique minimal set. 
\end{proof}

\begin{cor}\label{T-trace}
    Let $G\curvearrowright \Tb$ be a free action of a countably infinite discrete abelian group. Then $C(\Tb)\rtimes G$ has a unique tracial state $\tau$ satisfying 
    \begin{equation}
        \tau(f\lambda_g) = \begin{cases}
            \displaystyle\int_{\Tb} f\, d\mu, & g = e\\
            0, & \text{otherwise}
        \end{cases} \label{trace_eqn}
    \end{equation}
    where $\mu$ is the unique measure in $M_G(\Tb)$, $f\in C(\Tb)$ and $g\in G$. Moreover, the support of $\mu$ coincides with the unique minimal set for the action and in particular, $\tau$ is faithful if and only if the action is minimal. 
\end{cor}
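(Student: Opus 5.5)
The plan is to use the standard correspondence between traces on $C(X)\rtimes G$ and invariant measures when $G$ acts freely, and then apply Proposition~\ref{P-unique erg} and Lemma~\ref{L-minimal supp}.

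\emph{Existence.} Let $E : C(\Tb)\rtimes G \to C(\Tb)$ be the canonical faithful conditional expectation, determined by $E(f\lambda_g) = \delta_{g,e} f$. Let $\mu$ be the unique element of $M_G(\Tb)$ given by Proposition~\ref{P-unique erg}, and put $\tau := \mu\circ E$. Then $\tau$ is a state and satisfies (\ref{trace_eqn}). To check it is tracial, it suffices by linearity and continuity to verify $\tau(ab) = \tau(ba)$ for $a = f\lambda_g$ and $b = h\lambda_k$. We have $ab = f(gh)\lambda_{gk}$ and $ba = h(kf)\lambda_{kg}$. Both sides vanish unless $k = g^{-1}$, and in that case $G$-invariance of $\mu$ gives $\int f(gh)\,d\mu = \int (g^{-1}f)h\,d\mu$.

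\emph{Uniqueness.} This is the main step, and it is where freeness enters. Let $\sigma$ be any tracial state. Its restriction to $C(\Tb)$ is a probability measure $\nu$, and $\nu$ is $G$-invariant because $\sigma(\lambda_g f\lambda_g^*) = \sigma(f)$. By Proposition~\ref{P-unique erg}, $\nu = \mu$. It then remains to show $\sigma(f\lambda_g) = 0$ for $g\neq e$. Fix such $g$. Since the action is free, $gx\ne x$ for every $x$, so by compactness we can choose a finite partition of unity $\sum_j \phi_j^2 = 1$ in which each $\phi_j$ satisfies $\phi_j\cdot (g\phi_j) = 0$ (supports small enough that $g$ moves them off themselves). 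Using the trace property,
\[
\sigma(f\lambda_g) = \sum_j \sigma(\phi_j^2 f\lambda_g) = \sum_j \sigma(\phi_j f\lambda_g \phi_j) = \sum_j \sigma\bigl(\phi_j f (g\phi_j)\lambda_g\bigr) = 0,
\]
since $\phi_j(g\phi_j) = 0$. This is the standard argument, and it yields $\sigma = \tau$.

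\emph{Support and faithfulness.} Lemma~\ref{L-minimal supp} applies, since $G$ is abelian and hence amenable, and $M_G(\Tb)=\{\mu\}$. It gives that $\supp(\mu)$ is the unique minimal set. If $\tau$ is faithful, then $\mu$ is faithful on $C(\Tb)$, so $\supp(\mu)=\Tb$ and the action is minimal. Conversely, if the action is minimal, then $\mu$ has full support, so $\mu$ is faithful on $C(\Tb)$. Since $E$ is faithful, $\tau(a^*a) = \mu(E(a^*a)) = 0$ forces $E(a^*a)=0$ and hence $a=0$. Here faithfulness of $E$ holds on the reduced crossed product, which coincides with the full one by amenability. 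Of all these steps, the partition-of-unity argument for the vanishing of $\sigma$ off the diagonal is the only one that needs any care.
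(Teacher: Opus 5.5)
Your proof is correct and follows the same route as the paper: unique ergodicity (Proposition~\ref{P-unique erg}) and Lemma~\ref{L-minimal supp}, combined with the correspondence between tracial states and invariant measures for free actions. The paper cites that correspondence from \cite[Corollary 2.8]{KawTakTom90} and takes the faithfulness equivalence as immediate, whereas you prove the correspondence directly via the standard partition-of-unity argument and justify faithfulness through the faithful conditional expectation.
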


\begin{proof}
    By Proposition~\ref{P-unique erg} and Lemma~\ref{L-minimal supp}, $M_G(\Tb) = \{\mu\}$ and $\supp(\mu)$ is the unique minimal set. By \cite[Corollary 2.8]{KawTakTom90}, tracial states on $C(\Tb)\rtimes G$ correspond bijectively to $G$-invariant probability measures, so there is a unique tracial state $\tau$ of the form (\ref{trace_eqn}). Finally, $\tau$ is faithful if and only if $\mu$ has full support which, since $\supp(\mu)$ is the unique minimal set, holds if and only if the action is minimal.  
\end{proof} 

\begin{cor}\label{C-quotient trace}
     Let $G\curvearrowright \Tb$ be a free Denjoy action of a countably infinite discrete abelian group. Then the quotient $C(Y)\rtimes G$ as in (\ref{exact_2}) has a unique tracial state $\tau'$ satisfying $\tau = \tau'\circ q$, where $\tau$ is the unique tracial state on $C(\Tb)\rtimes G$. 
\end{cor}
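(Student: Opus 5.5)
We prove Corollary~\ref{C-quotient trace} by showing that the restricted action $G\curvearrowright Y$ is uniquely ergodic and free. We then apply \cite[Corollary 2.8]{KawTakTom90} to $C(Y)\rtimes G$, exactly as in Corollary~\ref{T-trace}, and finally compare the two traces through $q$.

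Freeness of $G\curvearrowright Y$ is immediate, since it is the restriction of a free action. For unique ergodicity, take $\nu'\in M_G(Y)$; nonemptiness is guaranteed by amenability of $G$. Extend it to $\nu\in M_G(\Tb)$ by setting $\nu(B):=\nu'(B\cap Y)$. Since $Y$ is a closed invariant set, $\nu$ is a $G$-invariant regular Borel probability measure on $\Tb$. By Proposition~\ref{P-unique erg}, $\nu=\mu$. Hence $\nu'=\mu|_Y$, where we use $\mu(Y)=1$, which holds because $\supp(\mu)=Y$ by Corollary~\ref{T-trace}. So $M_G(Y)=\{\mu|_Y\}$. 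Now \cite[Corollary 2.8]{KawTakTom90} applied to $G\curvearrowright Y$ gives a unique tracial state $\tau'$ on $C(Y)\rtimes G$, namely $\tau'(h\lambda_g)=\delta_{g,e}\int_Y h\,d\mu|_Y$ for $h\in C(Y)$.

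For the identity $\tau=\tau'\circ q$, there are two routes. The first is direct: $\tau'\circ q$ is a tracial state on $C(\Tb)\rtimes G$, because $q$ is a unital surjective $*$-homomorphism. By the uniqueness in Corollary~\ref{T-trace}, it must equal $\tau$. The second is by computation on generators: for $f\in C(\Tb)$,
\[
\tau'(q(f\lambda_g))=\tau'(f|_Y\lambda_g)=\delta_{g,e}\int_Y f\,d\mu=\delta_{g,e}\int_{\Tb}f\,d\mu=\tau(f\lambda_g),
\]
again using $\mu(\Tb\setminus Y)=0$. The two traces agree on the dense span of the elements $f\lambda_g$, so they agree everywhere by continuity.

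The only point needing care is the correspondence between traces and invariant measures for the quotient action. This requires freeness, or at least topological freeness, of $G\curvearrowright Y$. Here we have full freeness, so the hypothesis of \cite[Corollary 2.8]{KawTakTom90} holds. Apart from that, every step is routine.
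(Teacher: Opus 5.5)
Your proof is correct. The extension $\nu(B)=\nu'(B\cap Y)$ does give $M_G(Y)=\{\mu|_Y\}$, and freeness of $G\curvearrowright Y$ justifies the second appeal to \cite[Corollary 2.8]{KawTakTom90}. Either of your two routes then yields $\tau=\tau'\circ q$.

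The paper writes no proof and presents the statement as a direct consequence of Corollary~\ref{T-trace}. The shortest way to see it stays on $A=C(\Tb)\rtimes G$:
\begin{enumerate}
\item \emph{Uniqueness.} If $\tau_1',\tau_2'$ are tracial states on $C(Y)\rtimes G$, then $\tau_1'\circ q$ and $\tau_2'\circ q$ are tracial states on $A$. Both therefore equal $\tau$, and surjectivity of $q$ forces $\tau_1'=\tau_2'$.
\item \emph{Existence.} $\tau$ vanishes on $\ker q=C_0(\Tb\setminus Y)\rtimes G$, since $\tau(f\lambda_g)=0$ for $f\in C_0(\Tb\setminus Y)$ because $\supp(\mu)=Y$. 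Hence $\tau$ descends to a tracial state $\tau'$ on the quotient.
\end{enumerate}
This avoids proving unique ergodicity of $G\curvearrowright Y$ and the second use of \cite{KawTakTom90}. Your route costs those two steps but produces the explicit formula for $\tau'$ in terms of $\mu|_Y$ directly.

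One small correction to your closing remark: topological freeness alone does not give the bijection between traces and invariant measures. Take translation of $\Zb$ on its one-point compactification. The action is topologically free, yet the invariant Dirac mass at $\infty$ yields many traces that do not factor through the conditional expectation onto $C(X)$. What is needed is that every invariant measure assigns measure zero to the fixed-point sets, and genuine freeness guarantees this. Your proof is unaffected, since $G\curvearrowright Y$ is free.
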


The following generalises \cite[Proposition 4.1]{PutSchSka86}, with the same proof.

\begin{prop}
    Let $G\curvearrowright \Tb$ be a free Denjoy action of a countably infinite discrete abelian group. Then the ideal $C_0(\Tb\setminus Y)\rtimes G$ as in (\ref{exact_2}) is the unique maximal ideal in $C(\Tb)\rtimes G$, and corresponds to the trace ideal $\{a \in C(\Tb)\rtimes G : \tau(a^*a) = 0\}$ of the unique tracial state $\tau$ on $C(\Tb)\rtimes G$. 
\end{prop}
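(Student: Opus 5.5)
Let $J := C_0(\Tb\setminus Y)\rtimes G$, $A := C(\Tb)\rtimes G$, and $N_\tau := \{a\in A : \tau(a^*a)=0\}$. The plan is to show three things: $J$ is a maximal ideal, every proper ideal is contained in $J$, and $J = N_\tau$.

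\textbf{Maximality of $J$.} The quotient $A/J \cong C(Y)\rtimes G$ is simple. Indeed, $G\curvearrowright Y$ is minimal by definition of $Y$ and free as a restriction of a free action, and $G$ is amenable. The standard result (Archbold--Spielberg, or Kawamura--Tomiyama) then says a crossed product by a free (hence topologically free) minimal action is simple. Hence $J$ is a maximal ideal.

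\textbf{Every proper ideal lies in $J$.} Let $I\subseteq A$ be a proper ideal. Since $G$ is amenable and the action is free, hence topologically free, $I$ is determined by $I\cap C(\Tb)$. I will use this only in the weak form
\[
I\cap C(\Tb) = C_0(U)\quad\Longrightarrow\quad I \subseteq C_0(U)\rtimes G,
\]
where $U$ is an open $G$-invariant set. This holds because the action on the closed invariant set $\Tb\setminus U$ is free, so ideals of the crossed product there intersect $C(\Tb\setminus U)$ nontrivially (the intersection property), and the image of $I$ in $C(\Tb\setminus U)\rtimes G$ meets $C(\Tb\setminus U)$ only in zero.

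Properness of $I$ gives $U\neq\Tb$. The closed set $\Tb\setminus U$ is non-empty and invariant, so it contains the unique minimal set $Y$. Therefore $U\subseteq \Tb\setminus Y$, and so $I\subseteq J$. Thus $J$ is the unique maximal ideal.

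\textbf{Identification $J = N_\tau$.} The set $N_\tau$ is an ideal, since $\tau$ is a trace, and it is proper because $\tau(1)=1$. By the previous step $N_\tau\subseteq J$. Conversely, $\tau = \tau'\circ q$ by Corollary~\ref{C-quotient trace}, so $\tau$ vanishes on $J = \ker q$, giving $J\subseteq N_\tau$. Alternatively, for $f\in C_0(\Tb\setminus Y)$ we have $\tau(f^*f)=\int |f|^2\,d\mu = 0$ because $\supp\mu = Y$ (Corollary~\ref{T-trace}); then use that $N_\tau$ is an ideal and that elements $f\lambda_g$ span a dense subset of $J$. Hence $J = N_\tau$.

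\textbf{Main obstacle.} The delicate step is the ideal-intersection argument showing every proper ideal lies in some $C_0(U)\rtimes G$. I would cite the known correspondence, valid for amenable groups acting freely (exactness plus residual topological freeness), between ideals of $A$ and open invariant subsets of $\Tb$, as in \cite{KawTakTom90} or Sierakowski's results. This reduces the uniqueness claim to the purely dynamical fact that every non-empty closed invariant set contains $Y$.
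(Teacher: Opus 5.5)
Your proposal is correct and follows essentially the same route as the paper. The paper cites Zeller-Meier's theorem that every ideal of $C(\Tb)\rtimes G$ has the form $C_0(\Tb\setminus Z)\rtimes G$, whereas you derive the needed inclusion $I\subseteq C_0(U)\rtimes G$ from the intersection property (this tacitly uses that $C_0(U)\rtimes G\subseteq I$ because $C_0(U)\subseteq I$). From there both arguments conclude via uniqueness of the minimal set $Y$, and both identify $N_\tau$ with $J$ using properness of $N_\tau$ in one direction and $\tau|_J=0$ in the other, the latter coming from $\supp\mu=Y$ or equivalently $\tau=\tau'\circ q$. Your separate maximality step via simplicity of $C(Y)\rtimes G$ is harmless but redundant: $J$ is proper and contains every proper ideal, so it is automatically the unique maximal ideal.
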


\begin{proof}
    By \cite[Theorem 5.15]{Zel68}, every ideal in $A = C(\Tb)\rtimes G$ has the form $C_0(\Tb\setminus Z)\rtimes G$ for a closed, $G$-invariant subset $Z$ of $\Tb$. Since $Y$ is the unique minimal set for $G\curvearrowright \Tb$ (Proposition~\ref{P-structure}), $J := C_0(\Tb\setminus Y)\rtimes G$ is the unique maximal ideal. Let $J_\tau = \{a\in A : \tau(a^*a) = 0\}$. By Corollary~\ref{T-trace} $\supp(\mu) = Y$, thus for $f\in C_0(\Tb\setminus Y)$ and $g\in G$, we have 
    \[
        \tau((f\lambda_g)^*(f\lambda_g)) = \int_{\Tb} |f|^2 \, d\mu = 0.
    \]
    Since elements of the form $f\lambda_g$ span a dense subalgebra of $J$, we have $J\subseteq J_\tau$. Conversely, since $J_\tau$ is an ideal in $A$, we have $J_\tau\subseteq J$ by uniqueness of $J$, so $J_\tau = J$. 
\end{proof}

An ideal is primitive if it is the kernel of an irreducible representation. The set of primitive ideals, $\prim(A)$, is equipped with the hull-kernel topology as determined by the closure operation: the closure of a non-empty subset $F\subseteq \prim(A)$ is $\overline{F} := \{P \in \prim(A) : \bigcap_{I\in F} I \subseteq P\}$. From Corollary~\ref{P-ideal}, we obtain the analogue of \cite[Proposition 4.4]{PutSchSka86}. 

\begin{prop}\label{P-prim}
    Let $G\curvearrowright \Tb$ be a free Denjoy action of a countably infinite discrete abelian group on the circle, and let $A = C(\Tb)\rtimes G$. Then  $\prim(A)$ is homeomorphic to $Y_0\cup \{J\}$, where $Y_0 = \bigcup_{i=1}^k I_{n_i}$ is a union of representative intervals for each orbit of intervals in $\Tb\setminus Y$ with the subspace topology, and $\{J\}$ is contained in the closure of every non-empty subset. 
\end{prop}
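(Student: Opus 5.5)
We describe $\prim(A)$ through the extension (\ref{exact_2}), using the standard fact that for an ideal $J\subseteq A$ the space $\prim(A)$ is the disjoint union of an open subset homeomorphic to $\prim(J)$ (via $P\mapsto P\cap J$ for $P\not\supseteq J$) and a closed subset homeomorphic to $\prim(A/J)$ (via $P\mapsto P/J$ for $P\supseteq J$). Here $J = C_0(\Tb\setminus Y)\rtimes G$.

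The quotient $A/J\cong C(Y)\rtimes G$ is simple, because the action on $Y$ is free and minimal and $G$ is amenable. So $\prim(A/J)$ is a single point, which corresponds to the primitive ideal $J$ of $A$. It is primitive since a simple quotient admits a faithful irreducible representation. For the ideal, Corollary~\ref{P-ideal} gives $J\cong C_0(Y_0)\otimes\Kc(\ell^2(G))$. Hence $\prim(J)\cong \prim(C_0(Y_0))\cong Y_0$, where the homeomorphism sends $y$ to the kernel of evaluation at $y$ tensored with the identity. Consequently $\prim(A) = U\sqcup\{J\}$, with $U$ open and homeomorphic to $Y_0$, and $\{J\}$ closed. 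In concrete terms, for $y\in Y_0$ the primitive ideal $P_y$ is the kernel of the irreducible representation induced from evaluation at $y$; it contains exactly those elements of $J$ whose Green image vanishes at $y$.

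It remains to show that $J$ lies in the closure of every non-empty $F\subseteq\prim(A)$, i.e.\ that $\bigcap_{P\in F}P\subseteq J$. If $J\in F$ this is immediate. Otherwise $F\subseteq U$, so $I:=\bigcap_{P\in F}P$ is an ideal of $A$. This is where uniqueness of the maximal ideal enters: by the preceding proposition, every proper ideal of $A$ is contained in $J$. (Alternatively, every ideal has the form $C_0(\Tb\setminus Z)\rtimes G$ with $Z\ne\emptyset$ closed and invariant; such a $Z$ contains $Y$, and so the ideal is contained in $J$.) Now $I$ is proper because $I\subseteq P$ for any $P\in F$ and $P$ is proper. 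Therefore $I\subseteq J$, which means $J\in\overline F$.

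Finally, we must confirm that the topology on the whole space is the claimed one. Open subsets of $U$ are open in $\prim(A)$ and correspond to open subsets of $Y_0$. Every open set containing $J$ has a closed complement, and that complement cannot contain $J$. By the density just proved, the complement must then be empty. Hence the only open set containing $J$ is all of $\prim(A)$. So the topology is exactly "the subspace topology on $Y_0$, together with a point $J$ whose only neighbourhood is the whole space", which is the statement. The main care point is the identification $\prim(J)\cong Y_0$ through the Green isomorphism, together with the standard open/closed decomposition of $\prim(A)$; the density of $\{J\}$ is straightforward once maximality of $J$ is in hand.
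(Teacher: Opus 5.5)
Your proposal is correct and is essentially the argument the paper intends. The paper gives no separate proof: it derives the statement from Corollary~\ref{P-ideal}, which gives $\prim(J)\cong Y_0$, together with the preceding result that $J$ is the unique maximal ideal, in analogy with Putnam--Schmidt--Skau. Your open/closed decomposition of $\prim(A)$ along $J$, the simplicity of $C(Y)\rtimes G$, and the density argument via maximality of $J$ supply exactly those steps.
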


\subsection{Stable rank one and quasidiagonality}\label{sec:stable_quasi}

In this section, we prove stable rank one and quasidiagonality for crossed products of free  actions of abelian groups on the circle. Recall that a unital $C^*$-algebra has \emph{stable rank one} if the set of invertible elements is dense in norm. A $C^*$-algebra $A$ is said to be \emph{quasidiagonal} if it admits a faithful representation $\pi : A \to B(H)$ such that $\pi(A)$ is a family of quasidiagonal operators. 

\begin{thm}\label{T-sr}
    Let $G\curvearrowright \Tb$ be a free action of a countably infinite discrete abelian group on the circle. Then $C(\Tb)\rtimes G$ has stable rank one. 
\end{thm}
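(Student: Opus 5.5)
We split into the minimal and the non-minimal case, following Proposition~\ref{P-structure}. If $G\curvearrowright \Tb$ is minimal, then by Proposition~\ref{P-amen} it is conjugate to a free minimal action by rotations $g\mapsto R_{\rho(g)}$. The crossed product $A=C(\Tb)\rtimes G$ is then simple, separable, unital and nuclear. It has a unique trace (Corollary~\ref{T-trace}) and finite nuclear dimension, hence is $\Zc$-stable, so stably finite simple $\Zc$-stable $C^*$-algebras have stable rank one by R{\o}rdam's theorem. To avoid circularity with the nuclear dimension bound proved later, I would instead appeal directly to known results for free minimal actions of abelian groups with mean dimension zero: finite-dimensional spaces have mean dimension zero, so the crossed product is $\Zc$-stable. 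Alternatively, one can note that $A$ is an inductive limit of subhomogeneous algebras of dimension at most two (as in Remark~\ref{R-quotient}), and such limits have stable rank one by simplicity plus slow dimension growth. Either route gives $\sr(A)=1$.

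In the Denjoy case I use the extension (\ref{exact_2}),
\[
0\to J\to A\to B\to 0,\qquad J\cong \bigoplus_{i=1}^k C_0(\Rb)\otimes\Kc,\quad B=C(Y)\rtimes G.
\]
By \cite{Rie83} (Rieffel), an extension has stable rank one if and only if $\sr(J)=\sr(B)=1$ and the index map $\partial\colon K_1(B)\to K_0(J)$ vanishes on the unitaries lifting in the appropriate sense; the cleanest form is Nistor's or Lin's criterion that $\sr(A)=1$ iff $\sr(J)=\sr(B)=1$ and $\partial\colon K_1(B)\to K_0(J)$ is zero. The steps are as follows.
\begin{enumerate}
\item $\sr(J)=1$: this holds because $C_0(\Rb)\otimes\Kc$ has stable rank one (commutative of dimension one, stabilised), and direct sums preserve it.
\item $\sr(B)=1$: $B$ is simple, unital, nuclear, with unique trace (Corollary~\ref{C-quotient trace}), and arises from a free minimal action on a zero-dimensional space. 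It is therefore $\Zc$-stable (or an inductive limit of low-dimensional homogeneous algebras, Remark~\ref{R-quotient}) and has stable rank one by the same argument as in the minimal case.
\item $\partial=0$: here $K_0(J)\cong\bigoplus_i K_0(C_0(\Rb))=0$, since $K_0(C_0(\Rb))=K_1(\Cb)=0$. Hence the index map vanishes automatically.
\end{enumerate}
It follows that every unitary in $B$ (or in matrices over $B$) lifts to a unitary in $A$ up to the usual stabilisation.

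The main technical point is the extension criterion itself, which I would apply in Rieffel's form: for $a\in A$ and $\varepsilon>0$, approximate $q(a)$ by an invertible $b\in B$. Write $b=u|b|$ with $u$ unitary, which is possible by stable rank one of $B$ and the structure of invertibles. Lift $u$ to a unitary $\tilde u\in A$, using $\partial[u]=0$ together with $\sr(J)=1$, which lets one cancel in $K_0(J)$ and lift unitaries in $\mathcal{U}(B)$ rather than only in matrices. Then perturb inside $J$ using $\sr(J)=1$ to get an invertible near $a$. I expect the genuinely delicate step to be the unitary lifting without passing to matrices; this is exactly what the hypothesis $\sr(J)=1$ handles, via $K_1$-injectivity/cancellation for stable rank one ideals. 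The remaining input, stable rank one of the simple quotient $B$, is cited from classification-theoretic results rather than proved by hand.
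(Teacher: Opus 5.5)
Your proposal is correct and follows essentially the paper's proof. The minimal case goes via $\Zc$-stability plus R{\o}rdam's theorem; the paper cites Kerr--Szab\'o for $\Zc$-stability, which is a more accurate reference here than a general ``mean dimension zero'' statement. The Denjoy case goes via Nistor's extension criterion, with $\sr(J)=\sr(B)=1$ and $\partial=0$ because $K_0(J)=0$. Your extra sketch of why that criterion holds (lifting unitaries by cancellation in $\tilde J$, then perturbing inside $\tilde J$) is sound but unnecessary, since the paper simply cites Nistor's lemma.
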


\begin{proof}
    By Proposition~\ref{P-structure}, there are two cases: if $G\curvearrowright \Tb$ is free and minimal, then $C(\Tb)\rtimes G$ is simple unital and $\Zc$-stable (see, e.g., \cite[Theorem 8.2]{KerSza20}). Since it also admits a trace (Corollary~\ref{T-trace}), it must be finite and hence has stable rank one by \cite[Theorem 6.7]{Ror04}. 
    
    If $G\curvearrowright \Tb$ is not minimal, then it is a free Denjoy action. Let $A = C(\Tb)\rtimes G$, $J = C_0(\Tb\setminus Y)\rtimes G$ and $B = C(Y)\rtimes G$. Applying the same argument as the previous paragraph, $B$ has stable rank one. By Corollary~\ref{P-ideal}, $J \cong \bigoplus_{i=1}^k C_0(\Rb)\otimes \Kc(\ell^2(G))$. Hence 
    \begin{equation}
        \sr(J) = \sr\Big(\bigoplus_{i=1}^k C_0(\Rb)\otimes \Kc(\ell^2(G))\Big) = \sr(C_0(\Rb)\otimes \Kc(\ell^2(G))). \label{sr_calc}
    \end{equation}
    Since the minimal unitisation of $C_0(\Rb)$ is isomorphic to $C(\Tb)$ and $\dim \Tb = 1$, we have $\sr(C_0(\Rb)) = 1$ by \cite[Proposition 1.7]{Rie83}. Combining this with basic permanence properties of stable rank one, we conclude that $\sr(J) = 1$. By \cite[Lemma 3]{Nis87}, it remains to show that the index map $K_1(B) \to K_0(J)$ vanishes, which is immediate from the fact, easily verified by standard $K$-theoretic considerations, that $K_0(J) = \{0\}$.  
\end{proof}

\begin{rem}
    Freeness is a necessary assumption in Theorem~\ref{T-sr}. Indeed the trivial action of $\Zb^d$ on $\Tb$ yields a crossed product that is isomorphic to $C(\Tb^{d+1})$, which does not have stable rank one for all $d\ge 1$. 
\end{rem}

\begin{thm}\label{T-quasidiag}
    Let $G\curvearrowright \Tb$ be a free action of a countably infinite discrete abelian group on the circle. Then $C(\Tb)\rtimes G$ is quasidiagonal.
\end{thm}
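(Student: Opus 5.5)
The plan is to split into the same two cases as in Theorem~\ref{T-sr}, via Proposition~\ref{P-structure}.

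\emph{Minimal case.} Here $A = C(\Tb)\rtimes G$ is simple, separable, unital and nuclear, since $G$ is amenable. It is $\Zc$-stable (\cite[Theorem 8.2]{KerSza20}) and satisfies the UCT by Tu's theorem, since it is the crossed product of an amenable groupoid. By Corollary~\ref{T-trace} it has a faithful trace. The Tikuisis--White--Winter theorem then shows that every faithful trace on a separable nuclear $C^*$-algebra in the UCT class is quasidiagonal, so $A$ is quasidiagonal. A more hands-on alternative is also available: $A$ is a simple noncommutative torus-type algebra and hence an inductive limit of subhomogeneous algebras (Remark~\ref{R-quotient}), and such inductive limits are quasidiagonal.

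\emph{Denjoy case.} Let $J$, $A$ and $B$ be as in (\ref{exact_2}). The plan is to show that $A$ has a faithful trace-like family of finite-dimensional approximations, reducing to a criterion for extensions. I would use the standard fact that, for an extension $0\to J\to A\to B\to 0$ with $A$ separable and nuclear, $A$ is quasidiagonal provided $J$ and $B$ are quasidiagonal and the extension is quasidiagonal. By Brown--Dadarlat, the latter holds when $J$ has an approximate unit of projections that is quasicentral in $A$, or, more conveniently, when the boundary map $K_0(B)\to K_1(J)$ is suitably trivial.

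The ingredients are as follows. $B$ is quasidiagonal by the minimal-case argument applied to the free minimal action $G\curvearrowright Y$; it is Elliott-classifiable with a faithful trace and is UCT. $J\cong\bigoplus C_0(\Rb)\otimes\Kc$ by Corollary~\ref{P-ideal}, which is commutative-tensor-compact and hence quasidiagonal.

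The obstacle is the extension. Note that $K_0(J)=0$ while $K_1(J)=\bigoplus_{i=1}^k \Zb$, so the exponential map $K_0(B)\to K_1(J)$ can be nonzero, and the naive Brown--Dadarlat criterion may fail.

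I therefore expect the cleaner route to avoid extension theory altogether and use the Tikuisis--White--Winter theorem in its non-simple form: a separable exact $C^*$-algebra satisfying the UCT with a faithful amenable trace is quasidiagonal. Since $\tau$ is not faithful in the Denjoy case (Corollary~\ref{T-trace}), this forces a second approach. A faithful representation is obtained as the direct sum of the quasidiagonal representation factoring through $q$ and the regular representation on $\bigoplus_i L^2(I_{n_i})\otimes\ell^2(G)$ induced from $J$. So it suffices to show that $A$ is residually or locally approximated.

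Concretely, I would instead approximate the Denjoy action by minimal actions. Using Proposition~\ref{P-amen}, $A$ embeds into an ultraproduct-type construction, or more simply into $C(\Tb)\rtimes G$ for a conjugate of the rotation action after collapsing intervals. The embedding I have in mind is $C(\Tb)\subseteq L^\infty$ with respect to a measure $\nu$ of full support that is quasi-invariant, together with a Koopman-type twisted covariant pair. This shows that $A$ is a subalgebra of a quasidiagonal algebra, and quasidiagonality passes to subalgebras. Establishing such an embedding is the hardest step. If it fails, the fallback is to verify the exponential map vanishes using the $K$-theory computation, at least for $G=\Zb^d$.
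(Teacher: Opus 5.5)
\paragraph{There is a genuine gap in the Denjoy case.}

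Your minimal case is fine and is the paper's argument: Tu's theorem gives the UCT, Corollary~\ref{T-trace} gives a faithful trace, and Tikuisis--White--Winter finishes.

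In the Denjoy case you abandon the extension argument because of the exponential map $K_0(B)\to K_1(J)$, but that map is not the obstruction. The Brown--Dadarlat theorem \cite[Theorem 3.4]{BroDad04} says the following. Suppose $A$ is separable, $J$ and $B$ are quasidiagonal, and $B$ is nuclear and satisfies the UCT. Then $A$ is quasidiagonal as soon as the \emph{index} map $\partial\colon K_1(B)\to K_0(J)$ vanishes. The exponential map plays no role. For example, $0\to C_0(0,1)\to C[0,1]\to \Cb^2\to 0$ has nonzero exponential map, yet $C[0,1]$ is quasidiagonal. By contrast, the prototypical non-quasidiagonal extension, the Toeplitz extension, has nonzero index map $K_1(C(\Tb))\to K_0(\Kc)$. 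Since $J\cong\bigoplus_{i=1}^k C_0(\Rb)\otimes\Kc$ has $K_0(J)=0$, the index map vanishes trivially. Together with the quasidiagonality of $J$ and $B$, which you did verify, this completes the proof. That is exactly how the paper argues.

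\paragraph{Your proposed replacements do not close the gap.}

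As written, your proposal does not prove the Denjoy case, and the alternatives you sketch would not supply it.
\begin{itemize}
\item Collapsing the gaps of $\Tb\setminus Y$ produces the embedding $A_\theta\hookrightarrow A$ of Proposition~\ref{P-embed}. That goes the wrong way: it embeds the rotation algebra into $A$, not $A$ into a quasidiagonal algebra.
\item A Koopman-type covariant representation on $L^2(\Tb,\nu)$ only places $A$ inside $B(L^2(\Tb,\nu))$, which is not quasidiagonal. So no quasidiagonal overalgebra is actually exhibited.
\item Your fallback, showing that the exponential map vanishes, would fail in general. Take $G=\Zb$. In the six-term sequence we have $K_0(J)=0$, $K_1(J)\cong\bigoplus_{i=1}^k\Zb$, $K_1(A)\cong\Zb^2$, and $K_1(B)\cong\Zb$ (a Cantor minimal system). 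Exactness forces $K_1(A)\to K_1(B)$ to be surjective with kernel of rank one. Hence the exponential map $K_0(B)\to K_1(J)$ has image of rank $k-1$, which is nonzero once there are at least two orbits of gaps.
\item The version of the extension criterion based on a quasicentral approximate unit of projections is also unavailable, since $J$ has no nonzero projections.
\end{itemize}
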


\begin{proof}
    If $G\curvearrowright \Tb$ is minimal, then $C(\Tb)\rtimes G$ is separable, nuclear and satisfies the UCT \cite{Tu99}, and moreover admits a faithful tracial state by Corollary~\ref{T-trace}. Hence it is quasidiagonal by \cite{TikWhiWin17}. In the non-minimal case, we again recall the short exact sequence (\ref{exact_2}). Let $J$ and $B$ denote the ideal and quotient, respectively. Since $C(\Tb)\rtimes G$ is separable and $B$ is nuclear and satisfies the UCT, by \cite[Theorem 3.4]{BroDad04} it suffices to prove that $J$ and $B$ are quasidiagonal and the index map $K_1(B) \to K_0(J)$ vanishes. The ideal $J$ is quasidiagonal since every commutative $C^*$-algebra is quasidiagonal and quasidiagonality is preserved under stabilisation. By \cite{TikWhiWin17}, since $B$ is nuclear, satisfies the UCT, and has a faithful trace (Corollary~\ref{T-trace}), $B$ is also quasidiagonal. Finally, the index map vanishes since $K_0(J) = \{0\}$.
\end{proof}

\begin{rem}
	We have been unable to verify the stronger statement that $C(\Tb)\rtimes G$ in fact embeds into an AF algebra. When the action is minimal, this follows from Corollary~\ref{T-trace} and the main result of \cite{Sch20}. For the same reason, the quotient $C(Y)\rtimes G$ embeds into a unital simple AF algebra. We do not know of an analogue of \cite{Pim83}, which holds for possibly non-simple crossed products $C(X)\rtimes \Zb$, but suspect that $C(\Tb)\rtimes G$ should always embed into a (not necessarily simple) AF algebra (cf.~\cite[Proposition 4.3]{PutSchSka86}).   
\end{rem}

\subsection{Further applications}\label{sec:applications}

Introduced by Winter and Zacharias, nuclear dimension is a generalisation of covering dimension for topological spaces to (nuclear) $C^*$-algebras \cite{WinZac10}. Finite nuclear dimension has come to play a crucial role in the Elliott classification programme. Crossed products arising from free actions of finitely-generated countably infinite abelian groups on finite-dimensional spaces are known to have finite nuclear dimension thanks to the work of Szab\'o, Wu and Zacharias \cite[Corollary 8.6]{SzaWuZac19}. The authors obtain the upper bound 
\begin{equation}
    \nucdim(C(\Tb)\rtimes G) \le 4\cdot 3^{h(G)} - 1, \label{nuc_dim_est}
\end{equation}
where $h(G)$ denotes the Hirsch length of $G$ (see also \cite{Sza15}). Recall that $h(\Zb^d) = d$, so the upper bound in (\ref{nuc_dim_est}) can become arbitrarily large. 
 
\begin{cor}\label{C-nuc}
    Let $G\curvearrowright\Tb$ be a free action of a countably infinite discrete abelian group on the circle. Then 
    \begin{equation*}
        1\le \nucdim(C(\Tb)\rtimes G) \le 3.
    \end{equation*}
	If $G\curvearrowright \Tb$ is moreover minimal, then $\nucdim(C(\Tb)\rtimes G) = 1$. 
\end{cor}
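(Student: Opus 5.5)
We split into the minimal and non-minimal cases (Proposition~\ref{P-structure}), and in the non-minimal case we use the extension (\ref{exact_2}).

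\emph{Lower bound.} In either case $A = C(\Tb)\rtimes G$ is not AF. In the minimal case $\tau_*(K_0(A))$ contains the rotation numbers, and $A$ has a unitary (e.g.\ the one given by the identity function $z\in C(\Tb)$) with non-trivial $K_1$-class, detected via the quotient or the Pimsner--Voiculescu-type arguments. In the non-minimal case the same holds: the ideal $J\cong\bigoplus C_0(\Rb)\otimes\Kc$ has $K_1(J)\neq 0$. Since nuclear dimension zero characterises AF algebras, and AF algebras have $K_1=0$ (also for ideals of AF algebras), we get $\nucdim(A)\ge 1$. A cleaner route is to note that $J$ is an ideal, so $\nucdim(J)\le\nucdim(A)$, and $\nucdim(C_0(\Rb)\otimes\Kc)=\dim\Rb = 1$ by Winter--Zacharias. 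In the minimal case we argue via $K_1(A)\neq 0$ instead, obtained for instance from the fact that $[z]_1$ maps nontrivially under the canonical inclusion $C(\Tb)\to A$, since the trace-determinant or the rotation-algebra description detects it.

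\emph{Minimal case.} Here $A$ is simple, separable, unital, nuclear and $\Zc$-stable (as cited in the proof of Theorem~\ref{T-sr}, via \cite[Theorem 8.2]{KerSza20}). By the Castillejos--Evington--Tikuisis--White--Winter theorem, simple separable unital nuclear $\Zc$-stable $C^*$-algebras have nuclear dimension at most $1$. Together with the lower bound, this gives $\nucdim(A)=1$. The same argument gives $\nucdim(B)\le 1$ for the quotient $B=C(Y)\rtimes G$ in the Denjoy case, since $B$ is simple and $\Zc$-stable, again by Remark~\ref{R-quotient}.

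\emph{Non-minimal case.} We use the extension estimate $\nucdim(A)\le \nucdim(J)+\nucdim(B)+1$ from Winter--Zacharias \cite{WinZac10}. Here $\nucdim(J)=\nucdim(C_0(\Rb)\otimes\Kc)=1$, since nuclear dimension is stable under direct sums and stabilisation and equals covering dimension for commutative algebras. Also $\nucdim(B)\le 1$. This yields $\nucdim(A)\le 3$.

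The main obstacle is the lower bound in the minimal case, since it requires showing that $A$ is not AF. We will handle this by noting that $K_1(A)\ne 0$: the generator $z$ of $C(\Tb)$ is central in neither algebra, but its class survives. Alternatively, and more simply, a unital AF algebra with a unique trace has $\tau_*(K_0)$ a dense group, which doesn't distinguish it. We therefore rely on $K_1$. One route is that $A$ is the inductive limit, or contains as a corner-free subalgebra, the rotation-type structure; another is to apply Theorem~\ref{T-Kthyi} when $G=\Zb^d$. For general $G$, we take a cyclic subgroup $\Zb\le G$ and use that $A$ is an iterated crossed product $\big(C(\Tb)\rtimes\Zb\big)\rtimes(G/\Zb)$ where feasible, or else quote the known non-AF-ness of these algebras.
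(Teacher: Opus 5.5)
\textbf{Verdict: there is a genuine gap, in the minimal-case lower bound.} Your non-minimal case matches the paper: the Winter--Zacharias extension estimate with $\nucdim(J)=1$ and $\nucdim(B)\le 1$, plus the lower bound $\nucdim(A)\ge\nucdim(J)=1$. So does your upper bound $\nucdim(A)\le 1$ in the minimal case via Castillejos--Evington--Tikuisis--White--Winter.

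The gap is the lower bound $\nucdim(A)\ge 1$ in the minimal case. You rightly call it the main obstacle, but you do not prove it for a general countably infinite abelian $G$, and none of your routes works as stated:
\begin{enumerate}
\item Non-vanishing of $[z]_1$ in $K_1(C(\Tb))$ says nothing about its image in $K_1(A)$. By Pimsner--Voiculescu, $C(\Tb)$ and even $A_\theta$ embed into AF algebras, so ``its class survives'' is exactly what must be proved. The appeal to a ``trace-determinant'' or ``rotation-algebra description'' does not do this.
\item Theorem~\ref{T-Kthyi} concerns Denjoy, hence non-minimal, actions. In the minimal case the relevant input is Elliott's Theorem~\ref{T-Ell}, which only covers $G\cong\Zb^d$. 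But $G$ may be, for instance, $\Zb\times\Zb/2\Zb$, $\Zb[1/2]$, or the torsion group $\mathbb{Q}/\Zb$ acting by rotations, which is free and minimal.
\item Writing $C(\Tb)\rtimes G$ as $(C(\Tb)\rtimes\Zb)\rtimes(G/\Zb)$ needs $\Zb\le G$, which fails for $\mathbb{Q}/\Zb$. It also needs the extension to split; for $\Zb\le\Zb[1/2]$ it does not, and you only get a twisted crossed product. Even then it gives no handle on $K_1$.
\item Quoting the ``known non-AF-ness of these algebras'' is not a proof.
\end{enumerate}

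The missing idea is a duality that reduces the question to one Pimsner--Voiculescu computation, valid for every $G$. This is how the paper argues.
\begin{enumerate}
\item By Proposition~\ref{P-amen} the action is conjugate to $g\mapsto R_{\rho(g)}$.
\item Since $C(\Tb)\cong C^*(\Zb)$, you can exchange the two roles: $C(\Tb)\rtimes G\cong C^*(G)\rtimes_\alpha\Zb$. Here the $\Zb$-action is implemented by the coordinate unitary $z$, and $\alpha(\lambda_g)=\overline{\rho(g)}\lambda_g$.
\item Because $\alpha$ is unital, $[1]_0\in\ker(\mathrm{id}-\alpha_*)$ on $K_0(C^*(G))$.
\item Also $[1]_0\neq 0$ there, since the trivial representation $C^*(G)\to\Cb$ sends it to $[1]_0\neq 0$ in $K_0(\Cb)$.
\item Exactness of the Pimsner--Voiculescu sequence then gives an element of $K_1(A)$ mapped to $[1]_0$ by the boundary map. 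Hence $K_1(A)\neq 0$, so $A$ is not AF, and $\nucdim(A)\ge 1$.
\end{enumerate}
The boundary map sends the class of the implementing unitary $z$ to $\pm[1]_0$. So this argument proves exactly your intuition that $[z]_1\neq 0$ in $K_1(A)$; you just need to supply it.
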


\begin{proof}
    If $G\curvearrowright \Tb$ is minimal, then $\nucdim(C(\Tb)\rtimes G) \le 1$ by \cite[Theorem B]{Casetal21} and we claim that $\nucdim(C(\Tb)\rtimes G) \ge 1$. It suffices to show that $C(\Tb)\rtimes G$ is not AF by \cite[Remark 2.2~(iii)]{WinZac10}. To see this, note that $G\curvearrowright \Tb$ is conjugate to the rotation action determined by the rotation homomorphism $\rho : G \to \Tb$ (Proposition~\ref{P-amen}). By the universal property of the full crossed product, $C(\Tb)\rtimes G$ is isomorphic to $C^*(G)\rtimes_\alpha \Zb$, where $\alpha : \Zb \curvearrowright C^*(G)$ acts via the automorphism $\lambda_g \mapsto \overline{\rho(g)}\lambda_g, g\in G$. The trivial representation of $G$ induces a unital $*$-homomorphism $\nu : C^*(G) \to \Cb$. Hence the induced map on $K_0$ satisfies $\nu_*([\lambda_{e}]_0) = [1]_0 \ne 0$. It follows from the Pimsner--Voiculescu six-term exact sequence \cite{PimVoi80} that $C^*(G)\rtimes_\alpha \Zb \cong C(\Tb)\rtimes G$ has non-trivial $K_1$. Indeed, since $\alpha$ is unital, the class $[\lambda_e]_0$ lies in the kernel of $\mathrm{id}-\alpha_* \colon K_0(C^*(G))\to K_0(C^*(G))$, so by exactness there is an element of $K_1(C^*(G)\rtimes \mathbb Z)$ mapping to $[\lambda_e]_0 \ne 0$ under the boundary map. Thus $K_1(C(\mathbb T)\rtimes G)\neq 0$, and since AF algebras have trivial $K_1$, this proves the claim. Hence $\nucdim(C(\Tb)\rtimes G) = 1$. 
    
    For $G\curvearrowright \Tb$ non-minimal, recall the short exact sequence (\ref{exact_2}) given by 
    \[
        0 \longrightarrow J \longrightarrow A \longrightarrow B \longrightarrow 0
    \]
    where $J\cong \bigoplus_{i=1}^k C_0(\Rb)\otimes \Kc$ by Corollary~\ref{P-ideal}, $A = C(\Tb)\rtimes G$ and $B$ is unital separable simple nuclear and $\Zc$-stable \cite{Ker20} (cf.~Remark~\ref{R-quotient}). Using \cite[Corollary 2.10]{WinZac10}, we have $\nucdim(J) = \dim(\Rb) = 1$ and $\nucdim(B) \le 1$ by \cite[Theorem B]{Casetal21}. Hence it follows from \cite[Proposition 2.9]{WinZac10} that 
    \[
        \nucdim(A) \le \nucdim(J) + \nucdim(B) + 1 \le 3.
    \]
    Since $\nucdim(J) = 1$, we also have $\nucdim(A)\ge \nucdim(J) \ge 1$, which completes the proof. 
\end{proof}

We point out that the finite-generation assumption made in \cite{SzaWuZac19} can be dispensed with in Corollary~\ref{C-nuc}. Obtaining sharp estimates for nuclear dimension is notoriously difficult, though, in light of Corollary~\ref{C-nuc}, we pose the following:

\begin{question}
    Let $G\curvearrowright \Tb$ be a free Denjoy action. What is $\nucdim(C(\Tb)\rtimes G)$? 
\end{question}

\begin{rem}
	The Toms--Winter conjecture (which is known to hold in the monotracial case \cite{SatWhiWin15}) predicts that for simple separable unital nuclear $C^*$-algebras, finite nuclear dimension is equivalent to tensorial absorption of the Jiang--Su algebra $\mathcal{Z}$. However, if $G \curvearrowright \mathbb{T}$ is a free Denjoy action, then $C(\mathbb{T}) \rtimes G$ is not $\mathcal{Z}$-stable. The obstruction comes from the presence of a nonzero type~I ideal, which prevents absorption of the Jiang--Su algebra. 
\end{rem}

The real rank of a $C^*$-algebra $A$, denoted $\rr(A)$, was introduced by Brown and Pedersen \cite{BroPed91} and is defined similarly to stable rank. It follows from the definition that $\rr(A) \le 2\sr(A) - 1$ (see \cite[Proposition 1.2]{BroPed91}).

\begin{cor}\label{C-rr}
    Let $G \curvearrowright \Tb$ be a free Denjoy action of a countably infinite discrete abelian group. Then $C(\Tb)\rtimes G$ has real rank one.
\end{cor}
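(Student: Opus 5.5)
The upper bound comes straight from Theorem~\ref{T-sr}. That theorem gives $\sr(C(\Tb)\rtimes G)=1$, and the inequality $\rr(A)\le 2\sr(A)-1$ of \cite[Proposition 1.2]{BroPed91} then yields $\rr(A)\le 1$, where $A=C(\Tb)\rtimes G$. So the real content of the statement is that $\rr(A)\neq 0$.

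To show real rank zero fails, I would use the short exact sequence (\ref{exact_2}),
\[
0\to J\to A\to B\to 0 .
\]
By Corollary~\ref{P-ideal}, the ideal is $J\cong\bigoplus_{i=1}^k C_0(\Rb)\otimes\Kc(\ell^2(G))$. Real rank zero passes to ideals (hereditary subalgebras), so if $\rr(A)=0$ then $\rr(J)=0$, and hence $\rr(C_0(\Rb)\otimes\Kc)=0$ on restricting to one summand. Taking the corner $p(C_0(\Rb)\otimes\Kc)p\cong C_0(\Rb)$ for a rank-one projection $p\in\Kc$, again a hereditary subalgebra, we would get $\rr(C_0(\Rb))=0$.

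This is false. A commutative $C^*$-algebra $C_0(X)$ has real rank zero only if $X$ is totally disconnected. Concretely, $C_0(\Rb)$ has no nonzero projections, since $\Rb$ is connected and non-compact. A nonzero $C^*$-algebra of real rank zero, however, has an approximate unit of projections, so in particular it has nonzero projections. This contradiction gives $\rr(A)\ge 1$, and therefore $\rr(A)=1$.

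The only point needing care is the permanence step: that real rank zero passes to ideals and to corners. I would cite \cite{BroPed91} for this, namely that hereditary subalgebras of real rank zero algebras have real rank zero. Alternatively, one can avoid the corner argument by observing that projections in $C_0(\Rb)\otimes\Kc$ are trivial. Indeed, $K_0(J)=0$, as already used in Theorem~\ref{T-sr}, and $J$ is stably finite. Then a nonzero projection $P$ would satisfy $[P]_0=0$, forcing $P=0$ by stable finiteness (via cancellation coming from $\sr(J)=1$). So $J$ is a nonzero algebra with no nonzero projections and cannot have real rank zero.
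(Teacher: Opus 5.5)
Your proof is correct and follows the paper's route. The upper bound is $\rr(A)\le 2\sr(A)-1=1$ from Theorem~\ref{T-sr}. The lower bound comes from passing to the ideal $J\cong\bigoplus_{i=1}^k C_0(\Rb)\otimes\Kc$ and then down to $C_0(\Rb)$.

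The paper phrases the lower bound via Hasegawa's inequality $\rr(A)\ge\rr(J)$ together with $\rr(C_0(\Rb))=1$. You instead only rule out real rank zero, using its permanence under hereditary subalgebras. Your version is in fact the more robust justification of the step from $C_0(\Rb)\otimes\Kc$ to $C_0(\Rb)$, since real rank is not monotone under passing to corners in general.
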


\begin{proof}
    Let $A = C(\Tb)\rtimes G$. By Theorem~\ref{T-sr} and the comment preceding the corollary, $\rr(A)\le 1$. By \cite[Th\'eor\`eme 1.4]{Has95}, we have $\rr(A)\ge \rr(J)$, where $J = C_0(\Tb\setminus Y)\rtimes G$ is the unique maximal ideal in $A$. However by Corollary~\ref{P-ideal}, $\rr(J) = \rr(C_0(\Rb)\otimes \Kc) \ge \rr(C_0(\Rb)) = 1$ (the latter equality follows from \cite[Theorem 2.5]{BroPed91}). 
\end{proof}

\begin{rem}
	In contrast to Corollary~\ref{C-rr}, for a free minimal action $\Zb^d \curvearrowright \Tb$ the crossed product $C(\Tb)\rtimes \Zb^d$ is isomorphic to a simple noncommutative torus (see Section~\ref{S-Kthy}) and hence has real rank zero \cite[Theorem 3.8]{Phi06}. 
\end{rem}

\subsection{$K$-theory} \label{S-Kthy}

In this section we prove Theorem~\ref{T-Kthyi} (Theorems~\ref{T-Kthy} and \ref{T-Kthy2}). We refer the reader to \cite{RorLarLau00} for the necessary background on $K$-theory for $C^*$-algebras. For a projection $p\in M_\infty(A)$, we write $[p]_0$ for its class in $K_0(A)$ and for a unitary $u\in M_\infty(A)$ we write $[u]_1$ for its class in $K_1(A)$ (note that all of the $C^*$-algebras in this section are unital). 

As mentioned in the introduction, the proof of Theorem~\ref{T-Kthyi} involves finding a canonical embedding of a simple noncommutative torus in the crossed product associated with a free Denjoy action. We show that this inclusion induces an isomorphism of $K$-groups that is also compatible with the trace pairing on $K_0$. This idea is already implicit in \cite{PutSchSka86} (see the remark after the proof of Lemma 6.4 therein), though not stated as such. This allows us to outsource a lot of the technical details to \cite{Ell84} and, in particular, provides a more convenient description of a basis for the $K$-theory (see Remark~\ref{R-gen}).   

By Proposition~\ref{P-amen}, a free minimal action $\Zb^d\curvearrowright \Tb$ is given up to conjugacy by rotations. Thus $C(\Tb) \rtimes \Zb^d$ is generated by the unitary generator $u_1$ of $C(\Tb)$, given explicitly by $u_1(z) = z$, and unitaries $u_2,\ldots,u_{d+1}$ implementing the action. For each $i \in \{2,\ldots,d+1\}$, we have $u_i u_1 u_i^* = e^{-2\pi i \gamma_{i-1}}u_1$, where $\gamma_i$ is the angle of rotation of $e_i\in \Zb^d$, and hence rearranging slightly we have 
\[
	C(\Tb) \rtimes \Zb^d \cong C^*(u_1,\ldots,u_{d+1} : u_ju_i = e^{2\pi i \theta_{i,j}}u_iu_j), 
\]
where $\theta = (\theta_{i,j})$ is the real skew-symmetric matrix 
\begin{equation}
	\theta := \begin{pmatrix}
		0 & \gamma_1 & \gamma_2 & \cdots & \gamma_d \\
		-\gamma_1 & 0 & 0 & \cdots & 0\\
		-\gamma_2 & 0 & 0 & \cdots & 0\\
		\vdots & \vdots & \vdots & \ddots & \vdots \\
		-\gamma_d & 0 & 0 & \cdots & 0
	\end{pmatrix}. \label{E-torus}
\end{equation}
The matrix $\theta$ is \emph{nondegenerate}, i.e., if $x\in \Zb^{d+1}$ satisfies $e^{2\pi i\langle x, \theta y\rangle} = 1$ for all $y\in \Zb^{d+1}$, then $x=0$; this reflects the simplicity of $C(\Tb)\rtimes \Zb^d$ (see, e.g., \cite{Phi06}). In summary, $C(\Tb)\rtimes \Zb^d$ is isomorphic to the simple $(d+1)$-dimensional noncommutative torus $A_\theta$. Every noncommutative torus is equipped with a \emph{canonical trace} $\tau_\theta$, given by composing the conditional expectation onto $C(\Tb)$ with integration against the normalised Lebesgue measure on $\Tb$. For matrices of the form (\ref{E-torus}), this trace is unique.  

We recall for the reader's convenience Elliott's computation of the $K$-theory and trace pairing of $A_\theta$ \cite{Ell84} (see also \cite{Rie88}). 

\begin{thm}[{\cite{Ell84}}]\label{T-Ell}
	Let $\theta$ be a skew-symmetric real $d\times d$ matrix. Then, for $i\in \{0,1\}$,  
	\[
		K_i(A_\theta) \cong \Zb^{2^{d-1}}.	
	\]
	Moreover, there is an isomorphism from $K_0(A_\theta)$ to the even exterior algebra $\Lambda^{\mathrm{even}}\Zb^d$ sending $[1_{A_{\theta}}]_0$ to $1\in \Lambda^0\Zb^d = \Zb$ such that the induced map $(\tau_\theta)_*$ is equal to the exterior exponential $\exp(\theta) : \Lambda^{\mathrm{even}}\Zb^d \to \Rb$, where $\tau_\theta$ denotes the canonical trace on $A_\theta$. 
\end{thm}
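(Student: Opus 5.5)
The statement is Elliott's theorem, which the paper quotes from \cite{Ell84}. The plan below follows Elliott's original strategy: induct on $d$ by writing $A_\theta$ as an iterated crossed product by $\Zb$, and control $K$-theory and traces with the Pimsner--Voiculescu sequence.

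\textbf{Decomposition and group structure.} Write $\theta'$ for the upper-left $(d-1)\times(d-1)$ block of $\theta$. Then
\[
A_\theta \cong A_{\theta'}\rtimes_\alpha \Zb,
\]
where $\alpha$ is the automorphism $u_i\mapsto e^{2\pi i\theta_{i,d}}u_i$, for $i<d$, implemented by $u_d$. The automorphism $\alpha$ is homotopic to the identity through the path $\alpha_t(u_i)=e^{2\pi i t\theta_{i,d}}u_i$, $t\in[0,1]$. Hence $\alpha_*=\mathrm{id}$ on $K_*(A_{\theta'})$, and the Pimsner--Voiculescu sequence breaks into short exact sequences
\[
0\to K_i(A_{\theta'})\to K_i(A_\theta)\to K_{1-i}(A_{\theta'})\to 0 .
\]
By induction, starting from $K_i(C(\Tb))\cong\Zb$, every group appearing is free abelian, so these sequences split. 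This gives $K_i(A_\theta)\cong\Zb^{2^{d-2}}\oplus\Zb^{2^{d-2}}=\Zb^{2^{d-1}}$.

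\textbf{Identification with the exterior algebra.} To match $K_0(A_\theta)$ naturally with $\Lambda^{\mathrm{even}}\Zb^d$ (and $K_1(A_\theta)$ with $\Lambda^{\mathrm{odd}}\Zb^d$), I would use the splitting $\Lambda^*\Zb^d=\Lambda^*\Zb^{d-1}\oplus(\Lambda^*\Zb^{d-1})\wedge e_d$. On $K$-theory this corresponds to the inclusion $A_{\theta'}\hookrightarrow A_\theta$ together with a lift of the boundary map; concretely, the lift is the Connes/Rieffel-type element built from a class in $K_*(A_{\theta'})$ and the unitary $u_d$. Under this identification, $[1]_0$ corresponds to $1\in\Lambda^0$.

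\textbf{Trace pairing.} I would first prove that $(\tau_\theta)_*$ depends continuously (indeed polynomially) on $\theta$. For this I would realise all the $A_\theta$ together as a continuous field over the space of skew-symmetric matrices, which is contractible and whose fibres have locally constant $K$-theory; the field is a crossed product of $C(\text{parameters})\otimes A_{\theta'}$. I would then check $(\tau_\theta)_*=\exp(\theta)$ on generators in two ways.
\begin{enumerate}
\item At $\theta=0$, use the Chern character on $C(\Tb^d)$: the degree-$0$ part of the Chern character of a class is its rank, which matches $\exp(0)$ in degree $0$. To handle the higher-degree terms, I would instead use the derivation-based cyclic cocycles $\tau(\delta_{i_1}\cdots)$, as Elliott does.
\item In the inductive step, use the Pimsner--Voiculescu trace formula: for a $K_1$-class $[v]$ of $A_{\theta'}$ lifted to $K_0(A_\theta)$, the trace equals the pairing of $[v]$ with the derivation $\delta_d$ dual to the circle action. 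This produces the $\theta_{i,d}$ terms, which multiply together to give the exterior exponential.
\end{enumerate}

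\textbf{Main obstacle.} I expect the trace computation on the higher-degree generators to be the hardest step. There one needs the multiplicativity of the pairing with the lifted generators, and the $\theta$-independence of the integer "Chern class" parts. My first attempt would be Elliott's approach: realise $\tau_*$ through the $2^{d}$ cyclic cocycles $\phi_I(a_0,\dots,a_k)=\tau(a_0\delta_{i_1}a_1\cdots\delta_{i_k}a_k)$, which are homotopy invariant and whose pairings give exactly the Pfaffian-type coefficients of $\exp(\theta)$.
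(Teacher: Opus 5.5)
The paper gives no proof of this statement; it quotes it from \cite{Ell84}. Your first step is a correct and complete proof of $K_i(A_\theta)\cong\Zb^{2^{d-1}}$. It is the same Pimsner--Voiculescu induction the paper itself runs in Theorem~\ref{T-Kthy}: the automorphism is homotopic to the identity, and the resulting short exact sequences of free abelian groups split. The trace statement, however, is the real content of Elliott's theorem, and your proposal does not establish it. Choosing lifts of the boundary map gives \emph{some} isomorphism $K_0(A_\theta)\cong\Lambda^{\mathrm{even}}\Zb^d$ sending $[1]_0$ to $1$. But the trace of a lift is only determined modulo $\tau_*(K_0(A_{\theta'}))$, so nothing yet ties the isomorphism to $\exp(\theta)$. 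Step (i) only covers $\theta=0$. The continuous-field paragraph asserts polynomial dependence on $\theta$ without giving any mechanism that produces it.

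The concrete failure is that the induction in step (ii) does not close. The Pimsner--Voiculescu/Connes formula for $A_{\theta'}\rtimes_\alpha\Zb$ uses the generator $\delta=\sum_{i<d}\theta_{i,d}\delta_i$ of the flow $t\mapsto\alpha_t$ on $A_{\theta'}$, not a derivation on $A_\theta$. For a lift $\tilde x$ of $[v]\in K_1(A_{\theta'})$ it only gives $\tau_\theta(\tilde x)\equiv\sum_{i<d}\theta_{i,d}\,\frac{1}{2\pi i}\tau_{\theta'}(\delta_i(v)v^*)$ modulo $\tau_*(K_0(A_{\theta'}))$, up to sign conventions. To evaluate this on a basis of $K_1(A_{\theta'})$ you need the pairings of $K_1(A_{\theta'})$ with the $1$-cocycles $\tau_{\theta'}\circ\delta_i$. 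Your inductive hypothesis concerns only $K_0$ and the trace, so it does not supply these. Computing them for classes lifted from $K_0(A_{\theta''})$ in turn needs pairings with $2$-cocycles, and so on.

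What is missing is a strengthened inductive statement. It should identify $K_0\oplus K_1$ with $\Lambda^*\Zb^d$ so that the pairings with \emph{all} the cocycles $\phi_I$, $I\subseteq\{1,\ldots,d\}$, are given by the corresponding components of $\exp(\theta)$. It also needs a Pimsner--Voiculescu-type formula for every $\phi_I$, not just $\phi_\emptyset=\tau$, relating pairings of lifted classes to pairings on $A_{\theta'}$. Alternatively, along Elliott's deformation route, you need the derivative formula $\partial\langle x_\theta,\phi_I\rangle/\partial\theta_{jk}=\pm\langle x_\theta,\phi_{I\cup\{j,k\}}\rangle$ (suitably normalised) for classes transported along the field, combined with integrality of the Chern character at $\theta=0$. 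This is exactly what makes the $\theta_{i,d}$ assemble into the exterior exponential. It is also the step you name as the main obstacle but leave undone.
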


Rather than defining the exterior exponential here (see \cite{Ell84,Rie88} for the details), we provide a more explicit interpretation of Theorem~\ref{T-Ell} in the context of those $A_\theta$ arising from free minimal actions. We encountered this interpretation in \cite[Theorem 5.7.1]{ProSch16} (see also \cite[Proposition 4.1]{BenMat18}). By forgetting the graded algebra structure on $\Lambda^{\mathrm{even}}\Zb^{d+1}$, we have  
\[
	\Lambda^{\mathrm{even}}\Zb^{d+1} = \bigoplus_{\substack{I\subseteq \{1,\ldots,d+1\}, \\ |I| \text{ even}}} \Zb e_{I},  
\]
where for $I = \{i_1,\ldots,i_k\}$ with $i_1 < \cdots < i_k$, we define $e_I := e_{i_1} \wedge \cdots \wedge e_{i_k}$ and $e_{\emptyset} = 1 \in \Lambda^0\Zb^{d+1}$. By the binomial theorem, the number of (possibly empty) increasingly ordered subsets $I$ of $\{1,\ldots,d+1\}$ with even cardinality is
\[
\sum_{k \text{ even}} {{d+1}\choose{k}} = 2^d,
\] 
so these subsets parametrise a basis of $\Lambda^{\mathrm{even}}\Zb^{d+1}$. Fixing an ordering of the even-cardinality increasingly ordered subsets $I$ yields an identification $\Lambda^{\mathrm{even}}\Zb^{d+1} \cong \Zb^{2^d}$ of abelian groups. Recall that for a $2n\times 2n$ skew-symmetric matrix $\theta$, the Pfaffian of $\theta$, denoted $\operatorname{pf}(\theta)$, is defined as the real number 
\[
	\operatorname{pf}(\theta) = \frac{1}{2^n n!} \sum_{\sigma \in S_{2n}} \operatorname{sgn}(\sigma) \prod_{i=1}^n \theta_{\sigma(2i-1),\sigma(2i)}.
\]
Now Elliott's trace pairing formula may be expressed on basis elements as 
\begin{equation}
	(\tau_\theta)_*(e_I) = \operatorname{pf}(\theta_I),
\end{equation}
where $\operatorname{pf}(\theta_{\emptyset}) := 1$ and for a non-empty increasing subset $I\subseteq \{1,\ldots,d+1\}$, $\theta_I$ denotes the submatrix of $\theta$ obtained by restricting indices to $I$. For a skew-symmetric matrix of the form (\ref{E-torus}), $\operatorname{pf}(\theta_I)$ vanishes for all subsets $I$ of even cardinality unless $I = \emptyset$ or $I = \{1,i\}$, where $i\in \{2,\ldots,d+1\}$. In this case, we have 
\[
	\theta_{\{1,i\}} = \begin{pmatrix}
		0 & \gamma_i\\
		-\gamma_i & 0
	\end{pmatrix}, \quad \operatorname{pf}(\theta_{\{1,i\}}) = \gamma_i.
\]
Thus by ordering the basis of $\Lambda^{\mathrm{even}}\Zb^{d+1}$ so that $\emptyset, \{1,2\}, \ldots, \{1,d+1\}$ are listed first, and letting $\Psi : \Lambda^{\mathrm{even}}\Zb^{d+1} \to \Zb^{2^d}$ be the associated isomorphism, we obtain 
\begin{equation}
	(\tau_\theta)_* \circ \Psi^{-1}(n_0,\ldots,n_{2^d-1}) = n_0 + \sum_{i=1}^d \gamma_i n_i.
\end{equation}

We summarise the preceding discussion for later use.   

\begin{cor}\label{C-trace}
	Let $\theta$ be a skew-symmetric real $(d+1)\times (d+1)$ matrix of the form (\ref{E-torus}) and $\tau_\theta$ the canonical trace on $A_\theta$. Then there is an isomorphism $K_0(A_\theta)\cong \Zb^{2^d}$ sending $[1_{A_\theta}]_0$ to the first standard basis vector, with respect to which the trace pairing is given by  
	\[
	(\tau_\theta)_*(n_0,n_1,\ldots,n_{2^{d}-1}) = n_0 + \sum_{i=1}^d \gamma_i n_i.
	\]
In particular, $(\tau_\theta)_*(K_0(A_\theta)) = \Zb + \gamma_1\Zb + \cdots + \gamma_d\Zb$. 
\end{cor}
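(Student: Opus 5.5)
The plan is to read Corollary~\ref{C-trace} off Theorem~\ref{T-Ell} together with the Pfaffian description of the trace pairing given just above. There are three steps: set up the basis, check the trace values on it, and deal with uniqueness of the trace.

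First, the basis. By Theorem~\ref{T-Ell}, applied to the $(d+1)\times(d+1)$ matrix $\theta$ of the form (\ref{E-torus}), there is an isomorphism $K_0(A_\theta)\to\Lambda^{\mathrm{even}}\Zb^{d+1}$. It sends $[1_{A_\theta}]_0$ to $e_\emptyset = 1$, and it intertwines $(\tau_\theta)_*$ with $\exp(\theta)$. The even subsets $I\subseteq\{1,\ldots,d+1\}$ give a basis $\{e_I\}$, and there are $2^d$ of them. I order this basis so that $\emptyset,\{1,2\},\ldots,\{1,d+1\}$ come first, with the remaining even subsets in any order. Composing with the resulting coordinate isomorphism $\Psi$ gives $K_0(A_\theta)\cong\Zb^{2^d}$ with $[1]_0\mapsto(1,0,\ldots,0)$.

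Second, the trace values. I use the basis-level formula $(\tau_\theta)_*(e_I)=\operatorname{pf}(\theta_I)$ and compute each Pfaffian; this is the main check.
\begin{itemize}
\item If $1\notin I$, then $\theta_I$ is the zero matrix, so its Pfaffian vanishes.
\item If $1\in I$ and $|I|\ge 4$, then $\theta_I$ has rank at most $2$: only its first row and first column are nonzero. Since $\operatorname{pf}(\theta_I)^2=\det\theta_I=0$, the Pfaffian vanishes.
\item If $I=\{1,i+1\}$, the Pfaffian is the single entry, namely $\gamma_i$. (The text writes $\gamma_i$ for $I=\{1,i\}$; I will index carefully so that coordinate $n_i$ pairs with $\gamma_i$.)
\end{itemize}
Linearity then gives $(\tau_\theta)_*\circ\Psi^{-1}(n)=n_0+\sum_{i=1}^d\gamma_i n_i$. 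The image statement follows directly: choose $n_0,\ldots,n_d$ freely and set the other coordinates to zero. This yields $\Zb+\gamma_1\Zb+\cdots+\gamma_d\Zb$, and the reverse inclusion is clear from the formula.

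Third, the uniqueness of the trace. The statement only concerns the canonical trace $\tau_\theta$, so uniqueness is not logically needed. If one does want it, it follows from unique ergodicity of minimal rotation actions, as in Proposition~\ref{P-unique erg} and Corollary~\ref{T-trace}.

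I expect the only real subtlety to be bookkeeping: checking that Elliott's isomorphism matches the Pfaffian formula, including sign conventions for $\theta$ versus $-\theta$. A sign error would only replace $\gamma_i$ by $-\gamma_i$. That can be absorbed by replacing basis vectors $e_{\{1,i+1\}}$ with their negatives, so the stated form survives.
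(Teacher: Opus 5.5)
Your proposal is correct and follows the paper's argument: you apply Theorem~\ref{T-Ell}, use the Pfaffian form $(\tau_\theta)_*(e_I)=\operatorname{pf}(\theta_I)$ of the exterior exponential, observe that only $I=\emptyset$ and the pairs $\{1,j\}$ contribute, and order the basis so that these come first. Your rank argument for the vanishing Pfaffians when $|I|\ge 4$ is right, and so is your index correction $\operatorname{pf}(\theta_{\{1,i+1\}})=\gamma_i$ (the paper's display should read $\gamma_{i-1}$ for $I=\{1,i\}$ with $i\in\{2,\ldots,d+1\}$); both only make explicit what the paper leaves implicit.
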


The following key observation generalises \cite[Proposition 5.4]{PutSchSka86}.

\begin{prop}\label{P-embed}
	Let $\Zb^d \curvearrowright \Tb$ be a free Denjoy action with $\gamma_i$ the rotation number of the standard basis vector $e_i \in \Zb^d$. Then there is a canonical embedding $\Phi : A_\theta \hookrightarrow C(\Tb)\rtimes \Zb^d$, where $A_\theta$ denotes the noncommutative torus associated with the matrix (\ref{E-torus}). Moreover, the unique trace $\tau$ on $C(\Tb)\rtimes \Zb^d$ satisfies $\tau \circ \Phi = \tau_{\theta}$, where $\tau_{\theta}$ is the canonical trace on $A_\theta$. 
\end{prop}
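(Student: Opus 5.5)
Let $\varphi : \Tb\to\Tb$ be the semiconjugacy of Proposition~\ref{P-semiconj}, which by Proposition~\ref{P-amen} intertwines the Denjoy action with the rotation action $e_i \mapsto R_{\gamma_i}$. The plan is to realise $A_\theta$ as the crossed product $C(\Tb)\rtimes \Zb^d$ for this rotation action. Composition with $\varphi$ then gives a $\Zb^d$-equivariant unital $*$-homomorphism
\[
\varphi^* : C(\Tb) \to C(\Tb), \qquad f \mapsto f\circ \varphi .
\]
Equivariance holds because $(e_i f)\circ\varphi = f\circ R_{\gamma_i}^{-1}\circ \varphi = f\circ \varphi\circ e_i^{-1}$, using $R_{\gamma_i}\circ\varphi = \varphi\circ e_i$. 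The map $\varphi^*$ is injective since $\varphi$ is surjective.

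By functoriality of the full crossed product, $\varphi^*$ induces a unital $*$-homomorphism $\Phi : A_\theta \cong C(\Tb)\rtimes_{R} \Zb^d \to C(\Tb)\rtimes\Zb^d$. It is determined by $\Phi(u_1) = u_1\circ\varphi$ and $\Phi(u_{i+1}) = \lambda_{e_i}$. The commutation relations are preserved automatically, because $\varphi^*$ is equivariant.

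Next we check the trace identity $\tau\circ\Phi = \tau_\theta$ on the dense span of elements $f\lambda_g$. If $g\neq e$, both sides vanish: $\tau(\varphi^*(f)\lambda_g) = 0$ by (\ref{trace_eqn}), and $\tau_\theta$ kills non-identity Fourier terms. If $g = e$, we have
\[
\tau(f\circ\varphi) = \int f\circ\varphi\, d\mu = \int f\, d(\varphi_*\mu).
\]
As in the proof of Proposition~\ref{P-unique erg}, $\varphi_*\mu$ is a rotation-invariant probability measure for a minimal rotation action, hence equal to Lebesgue measure $\lambda$ (see also Remark~\ref{R-meas}). So this equals $\tau_\theta(f)$. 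Both traces are continuous, so the identity extends to all of $A_\theta$.

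For injectivity of $\Phi$, I would not try to argue through the ideal structure of the target. Instead, use that $A_\theta$ is simple, since $\theta$ is nondegenerate (equivalently, $\rho$ is injective and the rotation action is free and minimal). Then $\ker\Phi$ is an ideal of a simple unital algebra, and $\Phi(1)=1\neq 0$, so $\ker\Phi = 0$. As a consistency check, faithfulness of $\tau_\theta$ together with $\tau\circ\Phi=\tau_\theta$ also forces injectivity: if $\Phi(a)=0$ then $\tau_\theta(a^*a)=\tau(\Phi(a^*a))=0$, so $a=0$.

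The main obstacle is therefore minor. It is making sure that the identification of $A_\theta$ with the rotation crossed product matches the generators and sign conventions in (\ref{E-torus}), so that the embedding is canonical. It also requires confirming that $\varphi$ genuinely intertwines with rotation by exactly $\gamma_i$, rather than by some conjugate minimal action. For this I would invoke the explicit form of the semiconjugacy in Proposition~\ref{P-amen}, or alternatively take $\varphi(z) = \mu((z_0,z])$ as in Remark~\ref{R-meas}, which gives $\varphi(gz) = \varphi(z)+\rho(g)$ directly.
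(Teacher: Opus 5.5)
Your proposal is correct and follows essentially the same route as the paper. Both arguments pull back along the semiconjugacy to the rotation action $g\mapsto R_{\rho(g)}$, extend the equivariant map $\varphi^*$ to the crossed products, and verify $\tau\circ\Phi=\tau_\theta$ on the dense span of the elements $f\lambda_g$ using $\varphi_*\mu=\lambda$. The one difference is that you prove injectivity of $\Phi$ explicitly, via simplicity of $A_\theta$ or via faithfulness of $\tau_\theta$. The paper instead simply asserts that injectivity of $\varphi^*$ yields an embedding, which is implicitly justified because $\Zb^d$ is amenable and reduced crossed products preserve injective equivariant maps.
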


\begin{proof}
	By Proposition~\ref{P-amen}, there exists a semiconjugacy $\varphi : \Tb \to \Tb$ that intertwines the Denjoy action, which we denote by $\alpha$, with the rotation action $\beta : g \mapsto R_{\rho(g)}$. Since $\varphi$ is continuous and surjective, the pullback $\varphi^* : C(\Tb) \to C(\Tb)$ given by $f\mapsto f\circ \varphi$ is an injective $*$-homomorphism. By equivariance of $\varphi$, we have $\alpha(g)\circ \varphi^* = \varphi^*\circ \beta(g)$ for all $g\in G$. Hence $\varphi^*$ extends to an embedding $\Phi : C(\Tb)\rtimes_{\beta} \Zb^d \cong A_\theta \hookrightarrow C(\Tb)\rtimes_{\alpha} \Zb^d$ determined by $f\tilde{\lambda}_g \mapsto \varphi^*(f)\lambda_g$, for $f\in C(\Tb)$ and $g\in \Zb^d$, where $\lambda_g$ and $\tilde{\lambda}_g$ are the unitaries implementing the actions $\alpha(g)$ and $\beta(g)$, respectively. Moreover, for $f \in C(\Tb)$ and $g\in \Zb^d$, we use Corollary~\ref{C-trace} and Remark~\ref{R-meas} to compute, 
	\begin{align*}
		\tau(\varphi^*(f)\lambda_g) &= \begin{cases}
		\displaystyle \int_{\Tb} \varphi^*(f) \, d\mu, & g = e\\
		0, & \text{otherwise} 
		\end{cases}\\
		& = \begin{cases}
		\displaystyle \int_{\Tb} f\, d\lambda, & g = e\\
		0, & \text{otherwise} 
		\end{cases}\\
		&= \tau_{\theta}(f\tilde{\lambda}_g),
	\end{align*}
	where $\lambda$ denotes the Lebesgue measure on $\Tb$. Since elements of the form $\varphi^*(f)\lambda_g$ span a dense subalgebra of $\Phi(A_\theta)$, we have $\tau \circ \Phi = \tau_{\theta}$, which completes the proof.
\end{proof}

The $K$-groups $K_0(C(\Tb))$ and $K_1(C(\Tb))$ are generated by $[1]_0$ and $[u]_1$, respectively, where $u : z\mapsto z$ is the unitary generating $C(\Tb)$. There is a natural inclusion $C(\Tb)\hookrightarrow C(\Tb)\rtimes \Zb^d$ given by $f \mapsto f\lambda_e$. We will sometimes abuse notation and identify $f\lambda_e$ with $f$; in particular, we treat $[1]_0$ and $[u]_1$ as elements of $K_*(C(\Tb)\rtimes \Zb^d)$. 

\begin{thm}\label{T-Kthy}
	Fix $d\in \Nb$. Let $\Zb^d \curvearrowright \Tb$ be a free Denjoy action, and let $A = C(\Tb) \rtimes \Zb^d$. Then for $i\in\{0,1\}$, the embedding $\Phi : A_\theta \hookrightarrow A$ as in Proposition~\ref{P-embed} induces a group isomorphism $\Phi_{*,i} : K_i(A_\theta) \to K_i(A)$. Consequently, we have $K_i(A) \cong \Zb^{2^d}$ for $i\in \{0,1\}$.
\end{thm}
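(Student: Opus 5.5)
Two routes suggest themselves: induction on $d$ via Pimsner--Voiculescu, or a five-lemma argument on the extension (\ref{exact_2}). I would use the second, since $K_*(J)$ is explicit.

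\textbf{Setup.} The rotation action $\beta$ on the target circle is minimal, and the semiconjugacy $\varphi$ collapses each closure $\overline{I_n}$ to a point. Let $Z := \varphi(\Tb\setminus Y)$, a countable dense $\beta$-invariant set, and let $X := \Tb$ (target circle). Note that $\Phi$ does not map an ideal into $J$, so I would not compare two extensions of the same shape. Instead I would build a map of extensions by ``blowing up'' on the crossed product level. The $K$-theory comparison reduces to showing that $\Phi_*$ and the six-term sequence of (\ref{exact_2}) fit together.

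\textbf{Computation of $K_*(J)$.} By Corollary~\ref{P-ideal}, $J\cong \bigoplus_{i=1}^k C_0(\Rb)\otimes\Kc$, so $K_0(J)=0$ and $K_1(J)\cong \bigoplus_{i=1}^k\Zb$. The six-term sequence for (\ref{exact_2}) therefore gives
\[
0\to K_0(A)\to K_0(B)\xrightarrow{\partial} K_1(J)\to K_1(A)\to K_1(B)\to 0 .
\]

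\textbf{Inductive step and main obstacle.} The robust approach is induction on $d$ via Pimsner--Voiculescu. Write $A=(C(\Tb)\rtimes\Zb^{d-1})\rtimes\Zb$ and $A_\theta=A_{\theta'}\rtimes\Zb$, where $\Phi$ restricts to $\Phi'$ on the first $d-1$ generators, which is again an embedding of the same type. Here $\Zb^{d-1}$ acts freely, though possibly minimally or as Denjoy. If the action is minimal, the conjugacy makes $\Phi'$ an isomorphism. Naturality of the PV sequence, combined with the five lemma, reduces everything to the base case $d=1$ together with the case $d=0$, provided the lower stage's $\Phi'_*$ is an isomorphism. The base case $d=1$ is exactly \cite{PutSchSka86} (Proposition~5.4 and the remark after Lemma~6.4). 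To be self-contained I would reprove it: $K_0(C(\Tb))\to K_0(C(\Tb))$ is the identity under $\varphi^*$, and $K_1$ is preserved since $\varphi$ has degree one, so $\varphi^*$ induces an isomorphism on $K_*(C(\Tb))$.

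This observation in fact gives the whole result directly. Apply PV iteratively $d$ times, viewing $C(\Tb)\rtimes\Zb^d$ as iterated crossed products. At each stage $\Phi$ is equivariant for the remaining $\Zb$-actions. At the bottom stage $\varphi^*:C(\Tb)\to C(\Tb)$ is a $K$-isomorphism, because it is homotopic to the identity: a monotone degree-one map is homotopic to $\mathrm{id}$. By naturality of PV and the five lemma, each stage's map is then a $K$-isomorphism.

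The main obstacle is justifying the equivariant naturality at each stage. The maps $\Phi_j : C(\Tb)\rtimes_\beta\Zb^j\to C(\Tb)\rtimes_\alpha\Zb^j$ intertwine the actions of $e_{j+1}$, and this follows from $\alpha(g)\circ\varphi^*=\varphi^*\circ\beta(g)$. Given that, the PV diagrams commute by naturality of the PV sequence under equivariant maps. Finally, Theorem~\ref{T-Ell} gives $K_i(A_\theta)\cong\Zb^{2^d}$ for the $(d+1)$-dimensional torus, hence $K_i(A)\cong\Zb^{2^d}$.
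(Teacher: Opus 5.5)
Your final argument is correct and is essentially the paper's proof. You induct over $\langle e_1,\ldots,e_k\rangle$ via Pimsner--Voiculescu, use naturality for the equivariant maps $\Phi_{k-1}$, and start from $\varphi^*$ being a $K$-isomorphism on $C(\Tb)$; your homotopy argument (e.g.\ via the lifts $(1-t)x+t\tilde{\varphi}(x)$) replaces the paper's direct check on $[1]_0$ and $[u]_1$.

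The one small difference is that you apply the five lemma directly to the six-term ladder, which makes the paper's intermediate step $(\beta_k)_{*}=(\alpha_k)_{*}=\mathrm{id}$ unnecessary. The discarded extension route plays no role and can be dropped. So can the aside about the restricted $\Zb^{d-1}$-action being minimal: that case never occurs for $d\ge 2$, since $Y$ remains a proper closed invariant set.
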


\begin{proof}
	The final statement follows from Theorem~\ref{T-Ell}, thus it suffices to show that the induced maps $\Phi_{*,i}$ are isomorphisms for $i \in \{0,1\}$. Write $\alpha$ for the free Denjoy action $\Zb^d \curvearrowright \Tb$ and $\beta$ for the minimal rotation action $g\mapsto R_{\rho(g)}$ to which it is semiconjugate (Proposition~\ref{P-amen}). By definition, there is a semiconjugacy $\varphi : \Tb \to \Tb$, and the pullback $\varphi^* : C(\Tb) \to C(\Tb)$ is equivariant for the induced actions on $C(\Tb)$, i.e., $\varphi^* \circ \beta(g) = \alpha(g) \circ \varphi^*$ for all $g\in \Zb^d$. Let $H_0 := \{e\}$ be the trivial subgroup of $\Zb^d$ and, for $k \in \{1,\ldots,d\}$, let $H_k := \langle e_1,\ldots,e_k\rangle \cong \Zb^k$ be the subgroup generated by the first $k$ standard basis elements. We define, for $k\in \{0,\ldots,d\}$,
	\begin{equation}
		A_k := C(\Tb) \rtimes_{\alpha|_{H_k}} \Zb^k, \quad B_k := C(\Tb) \rtimes_{\beta|_{H_k}} \Zb^k.
	\end{equation}
Both $A_0$ and $B_0$ are canonically identified with $C(\Tb)$, $A_d = A$, and $B_d = A_\theta$. Since $\varphi^*$ is equivariant, it is also equivariant for each of the restricted actions $H_k \curvearrowright C(\Tb)$ so for each $k\in \{0,\ldots,d\}$ we obtain injective $*$-homomorphisms $\Phi_k : B_k \hookrightarrow A_k$ determined by $\Phi_k(f\tilde{\lambda}_g) = \varphi^*(f)\lambda_g$ for $f\in C(\Tb), g\in H_k$, where $\tilde{\lambda}_g$ and $\lambda_g$ denote the canonical implementing unitaries in $B_k$ and $A_k$, respectively. For $k = d$, this is precisely the embedding $\Phi : A_\theta\hookrightarrow A$ from Proposition~\ref{P-embed}.   
	
	Since $H_k = H_{k-1} \oplus \Zb e_k$ for $k \in \{1,\ldots,d\}$, we may also identify 
	\begin{equation}\label{E-iterated}
	A_k \cong A_{k-1} \rtimes_{\alpha_k} \Zb, \quad B_k \cong B_{k-1} \rtimes_{\beta_k} \Zb,
	\end{equation}
where $\alpha_k$ and $\beta_k$ are the automorphisms of $A_{k-1}$ and $B_{k-1}$ respectively induced by the actions of $e_k$. Explicitly, $\alpha_k$ extends $\alpha(e_k)$ on $C(\Tb)$ and fixes $\lambda_{e_i}$ for $i < k$, and similarly for $\beta_k$. These automorphisms are well-defined since the unitaries pairwise commute. For $f\in C(\Tb)$ and $h\in H_{k-1}$, the isomorphisms in (\ref{E-iterated}) are given explicitly by, 
\[
	f\lambda_{h+ne_k} \mapsto (f\lambda_h) v_k^n, \quad  f\tilde\lambda_{h+ne_k} \mapsto (f\tilde\lambda_h) w_k^n, 
\]
where $v_k$ and $w_k$ denote the implementing unitaries corresponding to the $\Zb$-actions $\alpha_k$ and $\beta_k$, respectively. Using this picture, we see that the $\Phi_k$ are compatible with one another in the following sense. The equivariance relation $\Phi_{k-1} \circ \beta_k = \alpha_k \circ \Phi_{k-1}$ yields, by the universal property of the crossed product, a homomorphism $\Phi_{k-1}\rtimes \Zb : B_{k-1} \rtimes_{\beta_k} \Zb \longrightarrow A_{k-1} \rtimes_{\alpha_k} \Zb$ satisfying $\Phi_{k-1}\rtimes \Zb((f\tilde{\lambda}_h) w_k^n) = (\varphi^*(f)\lambda_h) v_k^n$ for $f\in C(\Tb), h\in H_{k-1}, n\in \Zb$. Thus, under the identifications (\ref{E-iterated}), we have $\Phi_k = \Phi_{k-1} \rtimes \Zb$.  
	
	We prove by a finite induction on $k$ that the induced maps 
	\[
		(\Phi_k)_{*,i} : K_i(B_k) \longrightarrow K_i(A_k)
	\]
	are isomorphisms for $i\in \{0,1\}$ for all $k \in \{0,\ldots,d\}$. For $k = 0$, observe that $\varphi^*$ is unital so $(\Phi_0)_{*,0} = (\varphi^*)_{*,0}$ fixes the generator $[1]_0$ of $K_0(C(\Tb))$. Moreover, since $\varphi$ is degree one, we have 
	\[(\Phi_0)_{*,1}([u]_1) = (\varphi^*)_{*,1}([u]_1) = [u\circ \varphi]_1 = [u]_1.\] 
	Hence $(\Phi_0)_{*,i}$ is an isomorphism for $i \in \{0,1\}$. Now fix $k\in \{1,\ldots,d\}$ and suppose that $(\Phi_{k-1})_{*,i}$ is an isomorphism for $i\in \{0,1\}$. The rotation $\beta({e_k})$ is connected to the identity automorphism of $C(\Tb)$ through the path of rotations $R_{t\rho(e_k)}$, where $t\in [0,1]$. Since rotations commute, this extends to a homotopy of automorphisms of $B_{k-1}$ from the identity to $\beta_k$. Thus by homotopy invariance of $K$-theory, we have $(\beta_k)_{*,i} = \mathrm{id}$, $i\in\{0,1\}$.  

Since $\Phi_{k-1} \circ \beta_k = \alpha_k \circ \Phi_{k-1}$, by functoriality we have, for $i\in \{0,1\}$,
\[
	(\Phi_{k-1})_{*,i} \circ (\beta_k)_{*,i} = (\alpha_k)_{*,i} \circ (\Phi_{k-1})_{*,i}. 
\]
By assumption, $(\Phi_{k-1})_{*,i}$ is an isomorphism, and $(\beta_k)_{*,i}$ is the identity and thus $(\alpha_k)_{*,i} = (\Phi_{k-1})_{*,i} \circ (\beta_k)_{*,i} \circ (\Phi_{k-1})_{*,i}^{-1} = \mathrm{id}$. Since both $\beta_k$ and $\alpha_k$ act trivially on $K$-theory, applying the Pimsner--Voiculescu six-term exact sequence yields, for $i\in \{0,1\}$, short exact sequences 
\[
	0 \longrightarrow K_i(B_{k-1}) \longrightarrow K_i(B_k) \longrightarrow K_{1-i}(B_{k-1}) \longrightarrow 0
\]
and 
\[
	0 \longrightarrow K_i(A_{k-1}) \longrightarrow K_i(A_k) \longrightarrow K_{1-i}(A_{k-1}) \longrightarrow 0.
\]
By naturality of the Pimsner--Voiculescu six-term exact sequence applied to the equivariant map $\Phi_{k-1} : B_{k-1} \to A_{k-1}$, we obtain commutative diagrams
\[\begin{tikzcd}
	0 & K_i(B_{k-1}) & {K_i(B_k)} & K_{1-i}(B_{k-1}) & 0 \\
	0 & K_i(A_{k-1}) & {K_i(A_k)} & K_{1-i}(A_{k-1}) & 0
	\arrow[from=1-1, to=1-2]
	\arrow[from=1-2, to=1-3]
	\arrow["{(\Phi_{k-1})_{*,i}}", from=1-2, to=2-2]
	\arrow[from=1-3, to=1-4]
	\arrow["{(\Phi_k)_{*,i}}", from=1-3, to=2-3]
	\arrow[from=1-4, to=1-5]
	\arrow["{(\Phi_{k-1})_{*,1-i}}", from=1-4, to=2-4]
	\arrow[from=2-1, to=2-2]
	\arrow[from=2-2, to=2-3]
	\arrow[from=2-3, to=2-4]
	\arrow[from=2-4, to=2-5]
\end{tikzcd}
\]
By assumption, the left and right vertical maps are isomorphisms, and hence the middle vertical maps $(\Phi_k)_{*,i}$, which are canonically identified with $(\Phi_{k-1}\rtimes \Zb)_{*,i}$, are also isomorphisms, by the five lemma. This completes the induction and, since $\Phi_d = \Phi$ as in Proposition~\ref{P-embed}, the proof.     
\end{proof}

\begin{thm}\label{T-Kthy2}
	Fix $d\in \Nb$. Let $\Zb^d \curvearrowright \Tb$ be a free Denjoy action with $\gamma_i$ the rotation number of the standard basis vector $e_i \in \Zb^d$, and let $A = C(\Tb) \rtimes \Zb^d$. Denoting by $\tau$ the unique trace on $A$, there is an identification $K_0(A)\cong \Zb^{2^d}$ sending $[1_A]_0$ to the first standard basis vector, with respect to which the trace pairing $\tau_* : K_0(A) \to \Rb$ is given by 
	 \[
		\tau_*(n_0,n_1,\ldots,n_{2^d - 1}) = n_0 + \sum_{i=1}^d n_i \gamma_i.	
	\]
In particular,
	\[
		\tau_*(K_0(A)) = \Zb + \gamma_1\Zb + \cdots + \gamma_d \Zb.
	\] 
\end{thm}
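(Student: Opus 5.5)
The plan is to transport Corollary~\ref{C-trace} along the embedding $\Phi : A_\theta \hookrightarrow A$ of Proposition~\ref{P-embed}. By Theorem~\ref{T-Kthy}, $\Phi_{*,0} : K_0(A_\theta)\to K_0(A)$ is a group isomorphism. Let $\Psi' : K_0(A_\theta)\to \Zb^{2^d}$ be the identification from Corollary~\ref{C-trace}. We then define the identification
\[
	\Psi := \Psi'\circ \Phi_{*,0}^{-1} : K_0(A) \to \Zb^{2^d}.
\]

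First we check the normalisation. Since $\varphi^*$ is unital, $\Phi(1_{A_\theta}) = \varphi^*(1)\lambda_e = 1_A$, so $\Phi_{*,0}([1_{A_\theta}]_0) = [1_A]_0$. It follows that $\Psi([1_A]_0) = \Psi'([1_{A_\theta}]_0)$, which is the first standard basis vector.

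Next we compute the pairing. Proposition~\ref{P-embed} gives $\tau\circ\Phi = \tau_\theta$. Applying this to the matrix amplifications $\Phi\otimes \mathrm{id}_{M_n}$ and $\tau\otimes\mathrm{Tr}$ yields, by functoriality of the induced map on $K_0$,
\[
	\tau_*\circ\Phi_{*,0} = (\tau_\theta)_*.
\]
Therefore $\tau_*\circ\Psi^{-1} = (\tau_\theta)_*\circ (\Psi')^{-1}$. By Corollary~\ref{C-trace}, this map sends $(n_0,\ldots,n_{2^d-1})$ to $n_0+\sum_{i=1}^d n_i\gamma_i$. Here the $\gamma_i$ appearing in $\theta$ are precisely the rotation numbers $\rho(e_i)$ of the Denjoy action, because $\Phi$ is built from the semiconjugacy to $g\mapsto R_{\rho(g)}$ given by Proposition~\ref{P-amen}.

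The range statement follows at once. Since $\Psi$ is surjective, $\tau_*(K_0(A))$ is the image of $\Zb^{2^d}$ under the displayed formula, and this image is $\Zb+\gamma_1\Zb+\cdots+\gamma_d\Zb$.

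The only point needing care is that $A_\theta$ really satisfies the hypotheses of Corollary~\ref{C-trace}, i.e. that $\tau_\theta$ is its canonical (unique) trace. This holds because freeness forces $\rho$ to be injective, as in the proof of Proposition~\ref{P-proper}, so the rotation action $\beta$ is free and minimal. There is also a sign convention issue in the commutation relation for $\theta$ (the choice between $\gamma_i$ and $-\gamma_i$). We would handle it exactly as in the discussion preceding Corollary~\ref{C-trace}, and if necessary absorb any sign into the choice of basis vector, i.e. replace $e_I$ by $-e_I$.
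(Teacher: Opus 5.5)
Your proposal is correct and follows essentially the same route as the paper. You identify $K_0(A)$ with $\Zb^{2^d}$ via $\Psi'\circ\Phi_{*,0}^{-1}$, use unitality of $\Phi$ to normalise $[1_A]_0$, and transport the pairing formula of Corollary~\ref{C-trace} using $\tau\circ\Phi=\tau_\theta$ from Proposition~\ref{P-embed} together with the isomorphism from Theorem~\ref{T-Kthy}; your extra remarks on matrix amplifications, minimality of $\beta$ and sign conventions just make explicit details the paper leaves implicit.
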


\begin{proof}
Let \(\Phi:A_\theta\hookrightarrow A\) denote the embedding from
Proposition 3.22. By Theorem 3.23, \(\Phi\) induces an isomorphism $K_0(A_\theta)\cong K_0(A)$. Moreover, Proposition 3.22 gives $\tau\circ \Phi=\tau_\theta$, hence the diagram
\begin{equation}\label{E-comm}
\begin{tikzcd}
	{K_0(A_\theta)} & {K_0(A)} \\
	\Rb & \Rb
	\arrow["{\cong}", "{\Phi_{*,0}}"', from=1-1, to=1-2]
	\arrow["{(\tau_{\theta})_*}"', from=1-1, to=2-1]
	\arrow["{\tau_*}", from=1-2, to=2-2]
	\arrow[equal, from=2-1, to=2-2]
\end{tikzcd}
\end{equation}
commutes. Let $\Psi:K_0(A_\theta) \to \Zb^{2^d}$ be the identification from Corollary~\ref{C-trace}, chosen so that $\Psi([1_{A_\theta}]_0)$ is the first standard basis vector and the trace pairing is given by 
\[
	(\tau_\theta)_* \circ \Psi^{-1}(n_0,\ldots,n_{2^d-1}) = n_0 + \sum_{i=1}^d n_i \gamma_i.
\]
We identify $K_0(A)$ with $\Zb^{2^d}$ via the group isomorphism $\Psi \circ \Phi_{*,0}^{-1}$. Since $\Phi$ is unital, $\Phi_{*,0}([1_{A_\theta}]_0) = [1_A]_0$ and hence $[1_A]_0$ is sent to the first standard basis vector. The desired formula for $\tau_*$ now follows from the commutative diagram (\ref{E-comm}) and Corollary~\ref{C-trace}. The description of $\tau_*(K_0(A))$ follows immediately. 
\end{proof}

\begin{rem}\label{R-gen}
	It is immediate from Theorem~\ref{T-Kthy} that any fixed choice of bases
	for $K_i(A_\theta)$, $i\in\{0,1\}$, is transported via the inclusion
	$A_\theta\hookrightarrow A$ to bases for the $K$-groups of
	$A:= C(\Tb)\rtimes \Zb^d$. In particular, using Rieffel's work
	\cite{Rie88}, one may choose a basis of $K_0(A)$ consisting of
	$[1_A]_0$, the classes of Rieffel projections $p_i\in A$ satisfying
	$\tau(p_i)=\gamma_i$ for $i\in\{1,\ldots,d\}$, and $2^d-d-1$ Bott classes 			$x\in K_0(A)$ satisfying $\tau_*(x)=0$. The latter generators roughly 				correspond to tuples of commuting generating unitaries in $A$. As first				observed by Loring \cite{Lor88}, such elements can be viewed as classes of 			generalised virtual Rieffel projections in $M_2(A)$.

	As explained in \cite{PutSchSka86}, it is also possible to find Rieffel
	projections in $C(\Tb)\rtimes \Zb^d$ representing the relevant
	$K_0$-classes which do not belong to the embedded copy
	$A_\theta\subseteq C(\Tb)\rtimes \Zb^d$. This amounts to choosing
	functions in the construction of Rieffel projections
	which are not constant on the intervals of $\Tb\setminus Y$. We refer
	the reader to \cite[\S 2]{Bar16} for further details about Bott classes,
	including their connection to determining generators of $K_1(A)$.
\end{rem}

\begin{rem}
	From Theorem~\ref{T-Kthy2}, we see that given a free Denjoy action $\Zb^d \curvearrowright \Tb$, the rotation numbers $\gamma_i$ corresponding to each generator may be recovered from the $K$-theory of $C(\Tb) \rtimes \Zb^d$ by means of the trace pairing. This is certainly necessary to obtain classification, though much more information must be recovered; compare with \cite{PutSchSka86}, where the invariant $Q(\alpha)$ of a Denjoy homeomorphism $\alpha$ is recovered using the six-term exact sequence in $K$-theory coming from (\ref{exact_2}).
\end{rem}

\end{document}